\newtheorem{theorem}{Theorem}[section]
\newtheorem{corollary}[theorem]{Corollary}
\newtheorem{lemma}[theorem]{Lemma}
\newtheorem{proposition}[theorem]{Proposition}
\newtheorem{question}[theorem]{Question}
\newtheorem{observation}[theorem]{Observation}
\theoremstyle{definition}
\def\blfootnote{\gdef\@thefnmark{}\@footnotetext}
\begin{document}
\date{}
\title{Sets in Almost General Position}
\author{Luka Mili\'{c}evi\'{c}
\thanks{E-mail address: \texttt{lm497@cam.ac.uk}}}
\affil{Department of Pure Mathematics and Mathematical Statistics\\Wilberforce
Road\\Cambridge CB3 0WB\\UK}

\maketitle
\abstract{Erd\H{o}s asked the following question: given $n$ points in the plane in almost general position (no 4 collinear), how large a set can we guarantee to find that is in general position (no 3 collinear)? F\"{u}redi constructed a set of $n$ points in almost general position with no more than $o(n)$ points in general position. Cardinal, T\'{o}th and Wood extended this result to $\mathbb{R}^3$, finding sets of $n$ points with no 5 on a plane whose subsets with no 4 points on a plane have size $o(n)$, and asked the question for higher dimensions: for given $n$, is it still true that the largest subset in general position we can guarantee to find has size $o(n)$? We answer their question for all $d$ and derive improved bounds for certain dimensions.}

\blfootnote{\textup{2010} \textit{Mathematics Subject Classification}: 52C35; 52C10}

\section{Introduction}
A set of points in the plane is said to be \emph{in general position} if it has no 3 collinear points, and \emph{in almost general position} if there are no 4 collinear points. Let $\alpha(n)$ be the maximum $k$ such that any set of $n$ points in the plane in almost general position has $k$ points in general position. 
In~\cite{Erdos}, Erd\H{o}s asked for an improvement of the (easy) bounds $\sqrt{2n-1} \leq \alpha(n) \leq n$ (see equation (13) in the paper). This was done by F\"{u}redi~\cite{Furedi}, who proved $\Omega(\sqrt{n\log n}) \leq \alpha(n) \leq o(n)$. 

In~\cite{CardinalTothWood} Cardinal, T\'{o}th and Wood considered the problem in $\mathbb{R}^3$. Firstly, let us generalize the notion of general position. A set of points in $\mathbb{R}^d$ is said to be in general position if there are no $d+1$ points on the same hyperplane, and in almost general position if there are no $d+2$ points on the same hyperplane. Let $\alpha(n,d)$ stand for the maximum integer $k$ such that all sets of $n$ points in $\mathbb{R}^d$ in almost general position contain subset of $k$ points in general position. Cardinal, T\'{o}th and Wood proved that $\alpha(n,3) = o(n)$ holds. They noted that for fixed $d \geq 4$, only $\alpha(n,d) \leq C n$ is known, for a constant $C$, and they asked whether $\alpha(n,d) = o(n)$. The goal of this paper is to answer their question in all dimensions. In particular we prove the following.

\begin{theorem}For a fixed integer $d \geq 2$, we have $\alpha(n,d) = o(n)$.\end{theorem}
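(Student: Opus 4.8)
The plan is to exhibit, for every fixed $d\ge 2$ and every $\varepsilon>0$, a set of $n$ points in $\mathbb R^d$ in almost general position whose largest general‑position subset has fewer than $\varepsilon n$ points; letting $\varepsilon\to 0$ then yields $\alpha(n,d)=o(n)$. The whole construction is driven by the density Hales–Jewett theorem, with alphabet size $k:=d+1$. I would index the points by the combinatorial cube $[d+1]^m$ (so $n=(d+1)^m$) and build a map $\Phi\colon[d+1]^m\to\mathbb R^d$ that is \emph{additive across coordinates},
\[
\Phi(v)=\sum_{i=1}^m a_i\,\mu(v_i),
\]
where $\mu\colon[d+1]\to\mathbb R^d$ is a fixed injection and $a_1,\dots,a_m$ are rapidly increasing positive reals, chosen generically. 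The point of additivity is the following. A combinatorial line with active set $S\subseteq[m]$ and frozen coordinates $c_i$ consists of the $d+1$ tuples $v^{(1)},\dots,v^{(d+1)}$ with $v^{(j)}_i=j$ for $i\in S$ and $v^{(j)}_i=c_i$ otherwise, so
\[
\Phi(v^{(j)})=\underbrace{\sum_{i\notin S} a_i\,\mu(c_i)}_{=:b}\;+\;\Big(\sum_{i\in S}a_i\Big)\,\mu(j).
\]
Thus the $d+1$ images lie at $b+A_S\,\mu(j)$, $j=1,\dots,d+1$, with $A_S:=\sum_{i\in S}a_i>0$. Since affine dependence is invariant under the translation by $b$ and the scaling by $A_S$, these $d+1$ points lie on a common hyperplane if and only if $\mu(1),\dots,\mu(d+1)$ do. Hence a single property of $\mu$ makes \emph{every} combinatorial line coplanar at once.

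I would realise this property with a moment‑type curve missing one power. Set $\nu(s)=(s,s^2,\dots,s^{d-1},s^{d+1})\in\mathbb R^d$ and $\mu(j)=\nu\!\left(j-\tfrac{d+2}{2}\right)$, i.e.\ I centre the arguments so that they sum to zero: $\sum_{j=1}^{d+1}\big(j-\tfrac{d+2}{2}\big)=0$. The generalized Vandermonde determinant testing coplanarity of $\nu(s_1),\dots,\nu(s_{d+1})$ has exponent set $\{0,1,\dots,d-1,d+1\}$, so it factors as $\prod_{i<j}(s_j-s_i)\cdot\big(\sum_r s_r\big)$ (the Schur‑polynomial factor for the single missing power is $e_1=\sum_r s_r$). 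For distinct arguments it vanishes exactly when $\sum_r s_r=0$; the centering guarantees that $\mu(1),\dots,\mu(d+1)$ are coplanar, so every combinatorial line maps to $d+1$ coplanar points, as required. The same determinant computation controls coincidences \emph{within a single coordinate}: a hyperplane meets the degree‑$(d+1)$ curve $\nu$ in at most $d+1$ points, which is the local reason no more than $d+1$ images should conspire to be coplanar.

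With this in hand the bound is immediate from density Hales–Jewett: for every $\varepsilon>0$ there is $m_0(\varepsilon,d)$ such that for $m\ge m_0$ any $Q\subseteq[d+1]^m$ with $|Q|\ge\varepsilon(d+1)^m$ contains a combinatorial line, whence $\Phi(Q)$ contains $d+1$ coplanar points and is \emph{not} in general position. Therefore every general‑position subset of $P=\Phi([d+1]^m)$ has size $<\varepsilon n$ with $n=(d+1)^m$, giving $\alpha(n,d)<\varepsilon n$ along this sequence; one then interpolates to arbitrary $n$ by adding generic points. The absence of an effective rate in density Hales–Jewett is precisely why the conclusion is the bare $o(n)$, and I expect the promised improvements in special dimensions to come from substituting sharper quantitative bounds on combinatorial‑line‑free (or cap‑set/Behrend‑type) sets for small alphabet size $k=d+1$.

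The real work — and the step I expect to be the main obstacle — is verifying that the \emph{whole} set $P$ is in almost general position and that $\Phi$ is injective, i.e.\ that the coplanarities built in above are the \emph{only} ones and never accumulate to $d+2$ coplanar points. Here I would use that the $a_i$ are super‑increasing (and algebraically generic): an affine dependence $\sum_r\lambda_r\Phi(v^{(r)})=0$ with $\sum_r\lambda_r=0$ among $\le d+2$ points rewrites as $\sum_i a_i\big(\sum_r\lambda_r\mu(v^{(r)}_i)\big)=0$ in $\mathbb R^d$, and the rapid growth of the $a_i$ should force the per‑coordinate vectors $\sum_r\lambda_r\mu(v^{(r)}_i)$ to vanish separately. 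Because $\mu(1),\dots,\mu(d+1)$ satisfy a \emph{unique} affine dependence (they are $d+1$ coplanar points that are otherwise in general enough position), each coordinate can only realise that one dependence, which is exactly the pattern of a combinatorial line and involves $d+1$ indices, never $d+2$. Pushing this digit‑by‑digit argument through rigorously — fixing the growth rate of the $a_i$, ruling out degenerate sub‑dependences, and confirming injectivity — is the technical heart of the construction.
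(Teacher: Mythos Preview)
Your construction has a fatal structural flaw: the entire image $P=\Phi([d+1]^m)$ lies on a single hyperplane in $\mathbb R^d$, so it is as far from almost general position as possible.

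The problem is the combination of additivity of $\Phi$ with the requirement that $\mu(1),\dots,\mu(d+1)$ be cohyperplanar. If $\mu(1),\dots,\mu(d+1)$ all satisfy $\langle n,\mu(j)\rangle=c$ for some nonzero $n\in\mathbb R^d$ and scalar $c$ (this is exactly what cohyperplanarity of $d+1$ points in $\mathbb R^d$ means), then for every $v\in[d+1]^m$,
\[
\langle n,\Phi(v)\rangle=\sum_{i=1}^m a_i\,\langle n,\mu(v_i)\rangle=c\sum_{i=1}^m a_i,
\]
which is a constant independent of $v$. Hence all of $P$ lies in the hyperplane $\{x:\langle n,x\rangle=c\sum_i a_i\}$, and any $d+2$ of its points are cohyperplanar. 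No rapid growth or algebraic genericity of the $a_i$ can repair this; the identity above is exact. Conversely, if you avoid it by taking $\mu(1),\dots,\mu(d+1)$ affinely independent, then for every combinatorial line the images $b+A_S\mu(1),\dots,b+A_S\mu(d+1)$ are affinely independent too, and the density Hales--Jewett step yields no cohyperplanar $(d+1)$-tuple. So an additive $\Phi$ cannot meet both requirements simultaneously; your ``digit-by-digit'' verification at the end would, if carried out, simply rediscover this collapse rather than rule out stray dependences.

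This is a sharper manifestation of the obstruction the paper flags in its introduction (a generic linear image of $[k]^N$ in $\mathbb R^d$ already carries $k^{d-1}$ cohyperplanar points from each $(d-1)$-dimensional combinatorial subspace). The paper's way around it is to abandon additive maps and use genuinely nonlinear ones: the family $\mathcal F_{N,d,m}$ with coordinates $(\langle x,u\rangle+c)^\ell$, $\ell\le m$. The identities of Lemma~\ref{exInc} force every combinatorial line to be incident for all such maps, so its image under the incidence-removal function of Proposition~\ref{incremfn} is cohyperplanar, while nonlinearity prevents the global collapse you ran into; the substantive work (Lemma~\ref{algebraicLemma1} and Corollary~\ref{minincline}) then shows that every $\mathcal F_{N,m+1,m}$-incident set of size at most $m+2$ lies on a line, which is what rules out $d+2$ cohyperplanar images.
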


In fact, we are able to get better bounds for certain dimensions. This is the content on the next theorem.

\begin{theorem}Suppose that $d,m \in \mathbb{N}$ satisfy $2^{m+1} - 1 \leq d \leq 3.2^m - 3$. Let $N \geq 1$. Then $$\alpha(2^N,d) \leq \left(\frac{25}{N}\right)^{1/2^{m+1}} 2^N.$$\end{theorem} 

It is worth noting the lower bound $\alpha(n,d) = \Omega_d((n\log n)^{1/d})$ due to Cardinal, T\'{o}th and Wood~\cite{CardinalTothWood}, but we do not try to improve their bound in this paper.\\
 
In~\cite{Furedi} F\"{u}redi used the density Hales-Jewett theorem (\cite{FurstenbergKatznelson},~\cite{Polymath}) to establish $\alpha(n) = \alpha(n,2) = o(n)$. Here we reproduce his argument. By the density Hales-Jewett theorem, for a given $\epsilon > 0$, there is a positive integer $N$such that all subsets of $[3]^N$ of density $\epsilon$ contain a combinatorial line. Map the $[3]^N$ to $\mathbb{R}^2$ using a generic linear map $f$ to obtain a set $X = f([3]^N) \subset \mathbb{R}^2$. By the choice of $f$, collinear points in $X$ correspond to collinear points in $[3]^N$, and $f$ restricted to $[3]^N$ is injective. Therefore, $X$ has no 4 points on a line, and so is in almost general position, but if $S \subset X$ has size at least $\epsilon |X|$, the set $f^{-1}(S) \subset [3]^N$ has density at least $\epsilon$ in $[3]^N$. Therefore, $f^{-1}(S)$ has a line, hence $S = f(f^{-1})(S)$ has 3 collinear points. Since $\epsilon > 0$ was arbitrary, this proves that $\alpha(n,2) = o(n)$.\\
If one tries to generalize this argument to higher dimensions, by mapping $[m]^N$ to $\mathbb{R}^d$, then there will be $m^{d-1}$ cohyperplanar points, and we must have $m^{d-1} = d + 1$ to get almost general position. But the only positive integers that have this property are $(m,d) \in \{(3,2), (2,3)\}$. Taking $m = 2, d = 3$ gives $\alpha(n,3) = o(n)$, as observed by Cardinal, T\'{o}th and Wood~\cite{CardinalTothWood} and otherwise we have too many cohyperplanar points as $m^{d-1} > d + 1$. Overcoming this obstacle is the main goal of the paper.

\subsection{Organization of the paper}
Section 2 is devoted to motivating the arguments of this paper and to explaining the approach in the proofs of the main results. In the section 3, we introduce the key notion of this paper, \emph{$\mathcal{F}-$incident sets}, where $\mathcal{F}$ is an arbitrary family of maps from $\mathbb{R}^N$ to $\mathbb{R}^d$. Roughly, these are the sets that stay cohyperplanar under all maps in $\mathcal{F}$. The basic properties of $\mathcal{F}-$incident sets are studied and we prove the Proposition~\ref{incremfn}, which gives the incidence removal function, a single function which makes all sets non-cohyperplanar, except $\mathcal{F}-$incident sets. In the next section, we specialize to the study of $\mathcal{F}_{N,d,m}-$incident sets, where $\mathcal{F}_{N,d,m}$ is a family of maps from $\mathbb{R}^N$ to $\mathbb{R}^d$ similar to polynomials of $m-$th degree. In particular, in the Lemma~\ref{exInc} we show that combinatorial subspaces and lines in particular give raise to $span \mathcal{F}_{N,d,m}$-incident sets. The rest of the section is devoted to deriving a characterization of $\mathcal{F}_{N,d,m}$-incident sets in terms of vectors given by products of coordinates. The proof of $\alpha(n,d) = o_d(n)$ is the result of work in section 5, which also contains the main tool in the analysis of $\mathcal{F}_{N,d,m}-$incident sets, the Lemma~\ref{algebraicLemma1}. Finally, in the section 6, we improve the bounds for certain dimensions, using the Lemma~\ref{algLemmaSets} in the analysis of $\mathcal{F}_{N,d,m}-$incident sets.

\section{Motivation and the outline of the proof}
Recall that the main obstacle to generalizing F\"{u}redi's argument to higher dimensions is that $d$-cube have too many cohyperplanar points. A possible way to get around this issue is to modify the initial set $[m]^N$ to a subset $X$, which does not have too many incidences, and yet the Hales-Jewett theorem still holds in some form. The desired set would once again be the image of $X$ under a generic linear map from $\mathbb{R}^N \to \mathbb{R}^d$. It is tempting to try to remove certain points from each $(d-1)$-cube, so that precisely $d+1$ out of original $m^d$ remain. However, this is impossible for sufficiently large $N$, as the set $X \subset [m]^N$ gives a 2-colouring of $[m]^N$ (a point is blue if it is in $X$, red otherwise), and thus there is a monochromatic $(d+1)$-cube. Therefore, such an approach at least needs further modifications, if it can be made to work at all.\\

Having abandoned the first idea, it is natural try to map $[d+1]^N$ under a map $f$ which is more general than linear maps. Previously we used a generic linear map, in other words, this is a map which destroys all the cohyperplanarities, except those that are obvious, i.e. cohyperplanar sets in $[d+1]^N$. Our key notion in this paper is \emph{$\mathcal{F}$-incident set}. Let $\mathcal{F}$ be a family of functions from $\mathbb{R}^N$ to $\mathbb{R}^d$ that we are using instead of linear maps only. We say that a set $S \subset \mathbb{R}^N$ is \emph{$\mathcal{F}$-incident} if the multiset $f(S)$ is affinely dependent for all $f \in \mathcal{F}$. Crucially, we have a similar situation with more general maps as that in the case of linear maps, namely we can a find a `generic' map $f \in span \mathcal{F}$, such that if $f(S)$ is affinely dependent then $S$ is $\mathcal{F}-$incident. This is the content of Proposition~\ref{incremfn}, we refer to such a map as the `incidence removal function'.\\

After we have constructed the incidence removal function, the next aim is to study $\mathcal{F}$-incident sets for suitable $\mathcal{F}$. Our goal now is essentially the following: we want that dense subsets of $[m]^N$ contain a $\mathcal{F}$-incident set of size $d+1$ (which gives $d+1$ cohyperplanar sets), but that the image of $[m]^N$ under an incidence removal function does not contain $d+2$ cohyperplanar points. An easy way to fulfill the second requirement is to require that $\mathcal{F}$-incident sets of size $d+1$ cannot have interesection of size $d$. On the other hand, as in the case of linear maps, we use the density Hales-Jewett theorem for the first part, thus we want that the combinatorial subspaces are $span \mathcal{F}-$incident (not only $\mathcal{F}$-incident, as the incidence removal function belongs to $span \mathcal{F}$).\\

To give an idea how we choose the family of functions $\mathcal{F}$ making the combinatorial lines $span \mathcal{F}-$incident, observe the following identities that hold for arbitrary $a,b$:
\begin{description}
\item $\mathbf{1}.1 + (\mathbf{-3}).1 + \mathbf{3}.1 + (\mathbf{-1}).1 = 0$
\item $\mathbf{1}.a + (\mathbf{-3}).(a+b) + \mathbf{3}.(a+2b) + (\mathbf{-1}).(a+3b) = 0$
\item $\mathbf{1}.a^2 + (\mathbf{-3}).(a+b)^2 + \mathbf{3}.(a+2b) + (\mathbf{-1}).(a+3b)^2 = 0.$
\end{description}
What is crucial here is that we have the same coefficients appearing in the three linear combinations above. Hence, if look at $f:\mathbb{R}^N \to \mathbb{R}^3$ of the form 
\begin{dmath}\label{deg2eq}f(x) = \begin{pmatrix}(\langle x, v_1 \rangle + c_1)^2\\(\langle x, v_2 \rangle + c_2)^2\\(\langle x, v_3 \rangle + c_3)^2 \end{pmatrix}\end{dmath}
for some $v_1, v_2, v_3 \in \mathbb{R}^N$ and reals $c_1, c_2, c_3$, then $f(x), f(x+y), f(x+2y), f(x+3y)$ are necessarily cohyperplanar, as
\begin{dmath*}\mathbf{1}.f(x) + (\mathbf{-3}).f(x+y) + \mathbf{3}.f(x+2y) +(\mathbf{-1}).f(x+3y) = 0\end{dmath*}
and the sum of coefficients is zero. Even further, if $g$ is any linear combination of functions of the form described above, then $g(x), g(x+y), g(x+2y), g(x+3y)$ are cohyperplanar, owing to the same coefficients in the above identities.\\

With linear maps and $d=2$, we had that the image of $[3]^N$ to plane under a generic linear map is the desired set, as the combinatorial lines gave colinear sets of points. Now moving to functions that come from polynomials of degree 2, the image of $[4]^N$ under a `generic degree 2 function' to $\mathbb{R}^3$ has cohyperplanar sets of 4 points that are also images of combinatorial lines. After some analysis of $\mathcal{F}$-incident sets for $\mathcal{F}$ given by equation~(\ref{deg2eq}), we are able to show that these have interesection of size at most 1, if the size of sets is at most 4. The motivation for this step comes from the fact that we expect that our non-trivial $\mathcal{F}$-incident sets are precisely the relevant combinatorial subspaces (in this case the lines) and as such, they cannot have large intersection (in case of lines, they cannot share more than one point). This was the second requeirement that we had, sketching the proof that $\alpha(n,3) = o(n)$. This naturally extends to larger values of $d$.\\

Using different identities, we are able to get better bounds on $\alpha(n,d)$. For example, from the fact that $x^2 + (x+a+b)^2 + (x+a+c)^2 + (x+b+c)^2 = (x+a)^2 + (x+b)^2 + (x+c)^2 + (x+a+b+c)^2$, we are able to use 3-dimensional combinatorial subspaces of $\{0,1\}^N$ as the sources of cohyperplanar sets. Generalizing this identity to higher degrees, we can use the higher-dimensional combinatorial subspaces as well. The better bounds in this case come from the better bounds for density Hales-Jewett theorem~\cite{Polymath} in the case of $\{0,1\}^N$, the generalized Sperner's theorem.\\   

When it comes to analysis of $\mathcal{F}$-incident sets, let us first define precisely the families of functions that we shall consider in this paper. For given $N,d,m \in \mathbb{N}$ we define the family $\mathcal{F}_{N,d,m}$ of functions $f:\mathbb{R}^N\to\mathbb{R}^d$ of the form $f_i(x) = (\langle x, u_i \rangle + c_i)^l$ for $i=1, 2, \dots, d$, for any $u_1,u_2, \dots, u_d \in \mathbb{R}^N$, $c_1, c_2, \dots, c_d \in \mathbb{R}$ and $1 \leq l \leq m$.\\

First important claim regarding the $\mathcal{F}_{N,d,m}$-incident sets is the characterization given by the Proposition~\ref{incCritBasic}. To simplify the notation, we introduce the notion \emph{$(\leq m)-$function to $S$} as any function $f:A\to S$, where $A$ has size at most $m$. Given a vector $x \in \mathbb{R}^N$ and a $(\leq m)-$function $f$ to $[N]$, we define $f(x) = \prod_{a \in A} x_{f(a)}$. The Proposition~\ref{incCritBasic} tells us that $\{x_0, x_1, \dots, x_r\}$ for $r \leq d$ is $\mathcal{F}_{N,d,m}$-incident if and only if the vectors 
$$\begin{pmatrix} f_1(x_0)\\ f_2(x_0) \\ \vdots \\f_r(x_0)\end{pmatrix}, \begin{pmatrix} f_1(x_1)\\ f_2(x_1) \\ \vdots \\f_r(x_1)\end{pmatrix}, \dots, \begin{pmatrix} f_1(x_r)\\ f_2(x_r) \\ \vdots \\f_r(x_r)\end{pmatrix}$$
are affinely dependent for all $(\leq m)-$functions $f_1, f_2, \dots, f_r$. Then, in order to prove that our $\mathcal{F}_{N,d,m}-$incident sets cannot have large intersections, we use a Lemma~\ref{algebraicLemma1} and Lemma~\ref{algLemmaSets}. These combinatorial lemmas construct $(\leq m)-$functions which contradict the Proposition~\ref{incCritBasic}.

\section{Definition and basic properties of $\mathcal{F}$-incidences}

Throughout this section, $\mathcal{F}$ will stand for a family of maps from $\mathbb{R}^N$ to $\mathbb{R}^d$. Given such a family of functions $\mathcal{F}$, our goal is to understand the non-trivial affinely dependant sets of points in the images of $f \in \mathcal{F}$.\\
We say that points $s_1, s_2, \dots, s_k \in \mathbb{R}^d$ (not necessarily distinct) are \emph{affinely dependant} if there are $\lambda_1, \dots, \lambda_k \in \mathbb{R}$ not all zero such that $\sum_{i=1}^k \lambda_i = 0$ and $\sum_{i=1}^k \lambda_i s_i = 0$. A $k$-tuple $S = (s_1, s_2, \dots, s_k)$ of points in $\mathbb{R}^N$ is said to be $\mathcal{F}-$\emph{incident} if for all $f \in \mathcal{F}$ we have $f(s_1), f(s_2), \dots, f(s_k)$ affinely dependant. A set $S = \{s_1, s_2, \dots, s_k\}$ of points in $\mathbb{R}^N$ is $\mathcal{F}-$\emph{incident} if a corresponding $k$-tuple $(s_1, s_2, \dots, s_k)$ is. Further, $S$ is \emph{minimal} $\mathcal{F}$-\emph{incident} if it is $\mathcal{F}$-incident and no proper subset of $S$ is $\mathcal{F}-$incident.

\begin{proposition}(Incidence removal function.)\label{incremfn}Let $X \subset \mathbb{R}^N$ be finite and let $\mathcal{F}$ be a family of functions from $\mathbb{R}^N$ to $\mathbb{R}^d$. Then there is $f \in span\mathcal{F}$ with the property that
\begin{description}
\item if $\{s_1, s_2, \dots, s_k\}$ is not $\mathcal{F}$-incident, then $f(s_1), f(s_2), \dots, f(s_k)$ are affinely independent. ($\dagger$)
\end{description}
Furthermore, if $\mathcal{F}$ separates the points of $X$ (i.e. for distinct $x,y \in X$ there is $f \in \mathcal{F}$ such that $f(x) \not = f(y)$), then there is $f \in span \mathcal{F}$ which is injective on $X$, with the property ($\dagger$).\end{proposition}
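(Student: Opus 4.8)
The plan is to turn this into a genericity statement over a finite-dimensional space. Since $X$ is finite, the only thing about a function $f \in \operatorname{span}\mathcal{F}$ that matters is its restriction to $X$, so I would identify each $f$ with its evaluation vector $\phi(f) = (f(x))_{x \in X} \in (\mathbb{R}^d)^X$ and work inside the finite-dimensional linear subspace $V = \phi(\operatorname{span}\mathcal{F})$. Whether the images $f(s_1), \dots, f(s_k)$ of a set $S = \{s_1, \dots, s_k\} \subseteq X$ are affinely independent depends only on $\phi(f)$: encoding affine dependence through the augmented matrix $M_v(S)$ whose columns are $\binom{1}{v_{s_i}}$, the points are affinely independent precisely when $M_v(S)$ has full column rank $k$. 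Only subsets with $2 \le k \le d+1$ are relevant, since a singleton is always affinely independent and any $k > d+1$ points in $\mathbb{R}^d$ are always affinely dependent, so every such set is automatically $\mathcal{F}$-incident.

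Next, for each of the finitely many subsets $S \subseteq X$ that is \emph{not} $\mathcal{F}$-incident, I would consider the bad locus $B_S = \{v \in V : M_v(S) \text{ has rank} < |S|\}$, i.e. the set of evaluation vectors for which $f(S)$ is affinely dependent. This locus is the common zero set in $V$ of the $|S| \times |S|$ minors of $M_v(S)$, which are polynomials in the coordinates of $v$. The key point is that $B_S$ is a \emph{proper} subset of $V$: by the definition of non-$\mathcal{F}$-incidence there is some $g_S \in \mathcal{F} \subseteq \operatorname{span}\mathcal{F}$ for which $g_S(S)$ is affinely independent, so $\phi(g_S) \in V \setminus B_S$, and therefore at least one of these minors, a polynomial $P_S$ on $V$, does not vanish at $\phi(g_S)$. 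Thus $B_S \subseteq \{P_S = 0\} \subsetneq V$.

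With this in hand, the product $P = \prod_S P_S$ over all non-$\mathcal{F}$-incident $S$ is a finite product of nonzero polynomials on the finite-dimensional real vector space $V$, hence a nonzero polynomial, and so does not vanish identically. Choosing $v^{\ast} \in V$ with $P(v^{\ast}) \ne 0$ gives $v^{\ast} \notin B_S$ for every non-incident $S$ simultaneously; any $f \in \operatorname{span}\mathcal{F}$ with $\phi(f) = v^{\ast}$ — one exists because $\phi$ surjects onto $V$ by definition — then satisfies ($\dagger$). For the injectivity refinement, I would observe that if $\mathcal{F}$ separates the points of $X$ then every pair $\{x, y\}$ of distinct points of $X$ is non-$\mathcal{F}$-incident, since a separating map gives $g(x) \ne g(y)$ and two distinct points are affinely independent; as pairs are among the sets $S$ handled above, the same $f$ satisfies $f(x) \ne f(y)$ for all $x \ne y$, i.e. is injective on $X$.

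The heart of the argument, and the step I expect to be the main obstacle, is the passage from the pointwise statement (for each bad $S$ some $g \in \mathcal{F}$ breaks its dependence) to the uniform one (a single $f \in \operatorname{span}\mathcal{F}$ breaks them all at once). This is exactly where genericity enters, and the delicate point is to recognize that non-$\mathcal{F}$-incidence furnishes a witness $\phi(g_S)$ lying in the \emph{same} space $V$ over which the generic linear combination is formed; this is what makes each $B_S$ a proper subvariety and lets a single generic $f$ avoid all of them. One should also dispose of the degenerate case $\dim V = 0$, in which no non-incident set of size at least $2$ exists and the conclusion holds vacuously.
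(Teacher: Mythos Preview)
Your proof is correct and a bit cleaner than the paper's. The paper lists the non-$\mathcal{F}$-incident subsets $T_1,\dots,T_m$ with witnesses $f_i\in\mathcal{F}$ and builds the desired function \emph{inductively}: given $F_i\in\operatorname{span}\mathcal{F}$ that already works for $T_1,\dots,T_i$, it sets $F_{i+1}=F_i+\lambda f_{i+1}$ and argues, via a one-variable determinant polynomial in $\lambda$ (together with a small lemma allowing restriction to a square submatrix), that some small $\lambda$ preserves affine independence on the old $T_j$ while breaking dependence on $T_{i+1}$. You instead pass once and for all to the finite-dimensional evaluation space $V=\phi(\operatorname{span}\mathcal{F})\subset(\mathbb{R}^d)^X$, observe that each bad locus $B_S$ is the zero set of a minor and is proper in $V$ because the witness $\phi(g_S)$ lies in $V$, and then pick a single point avoiding the finite union. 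The underlying principle is identical --- affine dependence is a polynomial condition and finitely many proper real varieties cannot cover the ambient space --- but your packaging sidesteps the inductive bookkeeping and the $\lambda$-continuity argument; the paper's version is in exchange slightly more hands-on, exhibiting $f$ explicitly as a finite perturbation $\sum_i \lambda_i f_i$ of the individual witnesses. Your handling of the injectivity clause is the same as the paper's.
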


The proof of the proposition is based on simple linear algebra and some easy facts regarding the vanishing of polynomials. It can be skipped at the first reading, the reader should only be aware of the existence of the incidence removal function and its properties.

\begin{proof} Throughout this proof, for a function $f$ and set $S = \{s_1, s_2, \dots, s_k\}$, we regard $f(S)$ as a multiset of elements $f(s_1), \dots, f(s_k)$. So, if we say that $f(S)$ is affinely dependent, we mean $f(s_1), f(s_2), \dots, f(s_k)$ are affinely dependent.\\
Firstly, we prove the first part of the proposition. The last part will follow from a simple argument later. Let $T_1, T_2, \dots, T_m$ be the list of all subsets of $X$ which are not $\mathcal{F}$-incident. Thus, for each $i$ we have a function $f_i \in \mathcal{F}$ such that $f_i(T_i)$ is affinely independent. We shall inductively construct functions $F_i \in span \mathcal{F}$ such that all of $F_i(T_1), F_i(T_2), \dots, F_i(T_i)$ are affinely independent. Start by taking $F_1 = f_1$ for the case $i=1$.\\
Suppose that we have $i \geq 1$ such that $F_i(T_1), F_i(T_2), \dots, F_i(T_i)$ are affinely independent. Assume that $i < m$, otherwise we are done. Also, if $F_i(T_{i+1})$ is already affinely independent, simply take $F_{i+1} = F_i$. Hence, w.l.o.g. $F_{i}(T_{i+1})$ is affinely dependent. We shall construct $F_{i+1}$ as a linear combination $F_{i} + \lambda f_{i+1}$, where $\lambda > 0$ is sufficiently small so that it does not introduce new dependencies.\\  
Let $u_1, u_2, \dots, u_k \in \mathbb{R}^N$. Let $F^{(\lambda)} = F_{i} + \lambda f_{i+1}$ and suppose that $F^{(0)}(u_1), F^{(0)}(u_2), \dots, F^{(0)}(u_k)$ are affinely independent. Then $F^{(0)}(u_2) - F^{(0)}(u_1), \dots, F^{(0)}(u_k) - F^{(0)}(u_1)$ are linearly independent. 

\begin{lemma}\label{restrLemma}Suppose that $v_1, \dots, v_l \in \mathbb{R}^d$ are linearly independent. Then, we can find $I \subset [d]$ of size $l$ such that $v_1, \dots, v_l$ are still linearly independent when restricted in coordinates in $I$.\end{lemma}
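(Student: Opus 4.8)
The plan is to prove Lemma~\ref{restrLemma}, a standard fact from linear algebra: if $v_1, \dots, v_l \in \mathbb{R}^d$ are linearly independent, then there is a choice of $l$ coordinates on which they remain independent. The cleanest way I would approach this is via the matrix whose rows (or columns) are the $v_i$ and a rank argument.

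First I would form the $l \times d$ matrix $M$ whose $j$-th row is the vector $v_j$. Since $v_1, \dots, v_l$ are linearly independent, the row rank of $M$ equals $l$. Because row rank equals column rank, $M$ has column rank $l$ as well, which means we can select $l$ columns of $M$ that are linearly independent. Let $I \subset [d]$ be the index set of those $l$ columns. Then the $l \times l$ submatrix $M_I$ obtained by keeping only the columns in $I$ has full rank $l$, hence its rows remain linearly independent. But the rows of $M_I$ are exactly the restrictions of $v_1, \dots, v_l$ to the coordinates in $I$, which is precisely what we want.

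The only mild subtlety I would want to state carefully is the equivalence being used: restricting the vectors $v_j$ to the coordinates in $I$ corresponds to keeping the columns indexed by $I$ in $M$, and the restricted vectors are linearly independent if and only if the resulting $l \times l$ matrix is invertible. Rather than invoking row rank equals column rank as a black box, one could instead argue directly: extend $v_1, \dots, v_l$ to a basis of $\mathbb{R}^d$, or equivalently observe that since the $l$ rows are independent there is a nonvanishing $l \times l$ minor of $M$, and the columns of that minor give the desired set $I$. Either formulation is routine.

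I do not expect any real obstacle here, since this is a textbook fact; the main thing is to phrase it in the language of restricting vectors to coordinate subsets so that it plugs directly into the surrounding induction, where it is applied to the differences $F^{(0)}(u_2) - F^{(0)}(u_1), \dots, F^{(0)}(u_k) - F^{(0)}(u_1)$. The lemma will let the author pass to an $l$-dimensional coordinate projection on which independence is preserved, so that the subsequent perturbation argument with the parameter $\lambda$ can be carried out by a determinant-continuity (or genericity) argument in the smaller square matrix.
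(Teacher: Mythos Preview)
Your proof is correct and essentially identical to the paper's: both arrange the $v_i$ into a matrix, use that row rank equals column rank to select $l$ coordinates, and conclude that the resulting $l\times l$ submatrix has full rank. The only cosmetic difference is that you place the $v_i$ as rows while the paper places them as columns, so the roles of rows and columns are interchanged throughout.
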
 
\begin{proof} Look at the $d \times l$ matrix $A = (v_1 v_2 \dots v_l)$. Since $v_1, v_2, \dots, v_l$ are linearly independent, the column rank of $A$ is $l$. But the column rank is the same as the row rank, so we can find $l$ linearly independent rows $r_1, \dots, r_l$. Take $I = \{r_1, \dots, r_l\}$ and let $A'$ be the matrix $A$ restricted to rows in $I$. Then, the row rank of $A'$ is $l$, so its column rank is $l$, as desired.
\end{proof}

By Lemma~\ref{restrLemma} we can find a set of coordinates $I$ of size $k-1$ such that $F^{(0)}(u_2) - F^{(0)}(u_1), \dots, F^{(0)}(u_k) - F^{(0)}(u_1)$ are linearly independent after restriction. Restrict our attention to these coordinates. Then we can define $p(\lambda) = \det(F^{(\lambda)}(u_2) - F^{(\lambda)}(u_1), \dots, F^{(\lambda)}(u_k) - F^{(\lambda)}(u_1))$, which is a polynomial in $\lambda$. Since $p(0) \not= 0$, by continuity we have $\delta > 0$ such that if $|\lambda| < \delta$ then $p(\lambda) \not= 0$. Therefore, $F^{(\lambda)}(u_1), F^{(\lambda)}(u_2), \dots, F^{(\lambda)}(u_k)$ are affinely independent if $|\lambda| < \delta$.\\
We can apply this argument to all $T_1, \dots, T_i$, to get $\delta > 0$ such that if $|\lambda| < \delta$ then $(F_i + \lambda f_{i+1})(T_j)$ is affinely independent for all $j = 1, \dots, i$.\\
Now suppose that the choice $F_i + \lambda f_{i+1}$ does not work for us as $F_{i+1}$. Then, we must have $(F_i + \lambda f_{i+1}) (T_{i+1})$ is affinely dependent for all $|\lambda| < \delta$. Thus if $\lambda > \delta^{-1}$ then $(\lambda F_i + f_{i+1}) (T_{i+1})$ is affinely dependent. Now, apply the Lemma~\ref{restrLemma} to $f_{i+1}(T_{i+1})$ to get a set of coordinates of size $r-1$, on which this set is still affinely independent, and use a similar polynomial as before, this time $q(\lambda) = \det ((\lambda F_i + f_{i+1}) (t_2 - t_1) \dots (\lambda F_i + f_{i+1}) (t_r - t_1))$, where $T_{i+1} = \{t_1, t_2, \dots, t_r\}$. Then $q(0) \not= 0$, but $q(\lambda) = 0$ if $\lambda > \delta^{-1}$ which is a contradiction, and thus the first part of the proposition is proved.\\

For the last part, if $\mathcal{F}$ separates the points of $X$, observe that there are no two-element sets which are $\mathcal{F}$-incident. Hence, $f(x)$ and $f(y)$ are affinely independent by the first part, so $f$ is injective, as desired. 
\end{proof}

\section{Families of higher-degree maps and the resulting incident sets}
Throughout the rest of the paper we will focus on the family $\mathcal{F}_{N,d,m}$ of functions $f:\mathbb{R}^N\to\mathbb{R}^d$ of the form $f_i(x) = (\langle x, u_i \rangle + c_i)^l$ for $i=1, 2, \dots, d$, for any $u_1,u_2, \dots, u_d \in \mathbb{R}^N$, $c_1, c_2, \dots, c_d \in \mathbb{R}$ and $1 \leq l \leq m$.\\

We start by giving the examples of non-trivial $span \mathcal{F}_{N,d,m}$-incident sets. The proofs are based on algebraic identities, which were described in the introduction. For the case of lines, we use the rank-nullity theorem to prove that there is an identity we are looking for, and in the case of combinatorial subspaces, we prove the identity explicitly.

\begin{lemma}(Examples of non-trivial $span \mathcal{F}_{N,d,m}$-incident sets.)\label{exInc}
\begin{enumerate}
\item (Lines) For $x, y \in \mathbb{R}^N$, the $m+2-$tuple $(x + iy: i = 0, 1, \dots, m+1)$ is $span \mathcal{F}_{N,d,m}-$incident.
\item ($m+1$-dimensional combinatorial subspace) For $x_0, x_1, \dots, x_{m+1} \in \mathbb{R}^N$, the $2^{m+1}$-tuple $(x_0 + \sum_{i \in I} x_i: I \subset [m+1])$ is $span  \mathcal{F}_{N,d,m}-$incident.
\end{enumerate}
\end{lemma}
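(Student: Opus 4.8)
The plan is to reduce both parts to a single uniform statement about affine coefficients. To show that a tuple $(s_0, s_1, \dots, s_k)$ is $span \mathcal{F}_{N,d,m}$-incident, I claim it suffices to produce fixed scalars $\lambda_0, \dots, \lambda_k$, not all zero, with $\sum_j \lambda_j = 0$, such that
$$\sum_{j} \lambda_j (\langle s_j, u\rangle + c)^l = 0 \quad \text{for every } u \in \mathbb{R}^N,\ c \in \mathbb{R},\ 1 \le l \le m.$$
Indeed, any $g \in span \mathcal{F}_{N,d,m}$ has each coordinate $g_i$ equal to a linear combination of functions of the form $x \mapsto (\langle x, u\rangle + c)^l$ with $1 \le l \le m$; applying the displayed identity coordinatewise and using linearity yields $\sum_j \lambda_j g(s_j) = 0$, which together with $\sum_j \lambda_j = 0$ witnesses the required affine dependence. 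Crucially the \emph{same} coefficients $\lambda_j$ work for every $g$, so a single algebraic identity settles the whole family $span \mathcal{F}_{N,d,m}$ at once.

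For part 1 (lines), write $s_j = x + jy$, so that $\langle s_j, u\rangle + c = a + jb$ with $a = \langle x, u\rangle + c$ and $b = \langle y, u\rangle$. Expanding $(a+jb)^l$ by the binomial theorem, the coefficient of $a^{l-p}b^p$ in $\sum_j \lambda_j (a+jb)^l$ equals $\binom{l}{p}\sum_j \lambda_j j^p$, so the identity holds for all $a,b$ and all $l \le m$ precisely when $\sum_{j=0}^{m+1} \lambda_j j^p = 0$ for $p = 0, 1, \dots, m$ (the case $p=0$ also giving $\sum_j \lambda_j = 0$). This is a homogeneous linear system of $m+1$ equations in the $m+2$ unknowns $\lambda_0, \dots, \lambda_{m+1}$, so by rank-nullity it has a nonzero solution; explicitly one may take $\lambda_j = (-1)^j \binom{m+1}{j}$, the $(m+1)$-th finite-difference coefficients, which annihilate every polynomial in $j$ of degree at most $m$. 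This recovers the coefficients $1, -3, 3, -1$ from the introduction in the case $m = 2$.

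For part 2 (combinatorial subspaces), index points by $I \subseteq [m+1]$ via $s_I = x_0 + \sum_{i \in I} x_i$, and set $\lambda_I = (-1)^{|I|}$; these are not all zero and satisfy $\sum_I \lambda_I = (1-1)^{m+1} = 0$. Writing $\langle s_I, u\rangle + c = a_0 + \sum_{i \in I} a_i$ with $a_0 = \langle x_0, u\rangle + c$ and $a_i = \langle x_i, u\rangle$, I would recast the required sum as
$$\sum_{\epsilon \in \{0,1\}^{m+1}} (-1)^{\epsilon_1 + \dots + \epsilon_{m+1}} \Big(a_0 + \sum_{i=1}^{m+1} \epsilon_i a_i\Big)^l$$
and apply the multinomial theorem. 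Each resulting monomial $a_0^{k_0} \prod_i (\epsilon_i a_i)^{k_i}$ has total degree $l \le m < m+1$, so at least one index $i \in [m+1]$ has $k_i = 0$; the sum over that single coordinate then contributes a factor $\sum_{\epsilon_i \in \{0,1\}} (-1)^{\epsilon_i} = 0$, killing the monomial. Hence the entire sum vanishes for every $1 \le l \le m$, as claimed.

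The reduction is routine, and the substance lies in the two identities. I expect the multinomial bookkeeping in part 2 — tracking which monomials survive the alternating sum and verifying that the degree constraint $l \le m$ is exactly what forces a missing variable — to be the most delicate point, though conceptually it is just the vanishing of a multivariate $(m+1)$-fold finite difference applied to a polynomial of degree at most $m$.
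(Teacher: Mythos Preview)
Your proposal is correct and follows essentially the same approach as the paper: the same reduction to fixed affine coefficients working uniformly over $span\,\mathcal{F}_{N,d,m}$, rank--nullity for the line case (the paper does not bother to name the explicit binomial solution you give), and $\lambda_I = (-1)^{|I|}$ together with a degree-counting argument for the subspace case. The paper packages part~2 as a separate sublemma and first reduces to $a_0 = 0$ before reading off coefficients, whereas your multinomial/missing-variable presentation handles $a_0$ in one pass, but the underlying idea is identical.
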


\begin{proof}\emph{Lines.} We show that there are $\lambda_0, \dots, \lambda_{m+1}$, not all zero, such that for all $f \in \mathcal{F}_{N,d,m}$ we have $\sum_{i=0}^{m+1} \lambda_i f(x + iy) = 0$ and $\sum_{i=0}^{m+1} \lambda_i = 0$. Then, the same linear combination shows that $f(x), f(x+y), \dots, f(x + (m+1)y)$ are affinely dependent for $f \in span \mathcal{F}_{N,d,m}$.\\

Thus, we want non-trivial $\lambda_i$ adding up to zero, such that for all $u \in \mathbb{R}^N, c \in \mathbb{R}, l \in [m]$ we have
\begin{dmath*} \sum_{i=0}^{m+1} \lambda_i \left(\langle x + iy, u\rangle + c\right)^l = 0.\end{dmath*}
This is equivalent to 
\begin{dmath*} \sum_{i=0}^{m+1} \lambda_i \langle x + iy, u\rangle^l = 0\end{dmath*}
for all $u \in \mathbb{R}$ and $l \in [m]$. 
Further, this is equivalent to
\begin{dmath*} \sum_{i=0}^{m+1} \lambda_i i^l = 0\end{dmath*}
for all $l = 0, 1, \dots, m$.
Hence, if $\lambda_0, \dots, \lambda_{m+1}$ satisfy
\begin{dmath*} \sum_{i=0}^{m+1} \lambda_i i^l = 0\end{dmath*}
for all $l = 0, 1, \dots, m$, we are done. But by rank-nullity theorem (`more variables than equations'), we must have non-trivial solution to these equations, giving us the desired $\lambda_i$.\\

\emph{$m+1$-dimensional subspaces.} As in the case of lines, we show that there are $\lambda_I, I\subset [m+1]$, not all zero, but adding up to zero, such that $\sum_{I \subset[m+1]} \lambda_I f(x_0 + \sum_{i \in I} x_i) = 0$, for all $f \in \mathcal{F}_{N,d,m}$, which suffices to prove the claim in the full generality. In this case, we can actually set $\lambda_I = (-1)^{|I|}$.\\
It is enough to show that for any $u \in \mathbb{R}^N, c \in \mathbb{R}, l \in [m]$ we have 
\begin{dmath*}\sum_{I \subset [m+1]} (-1)^{|I|} (\langle x_0 + \sum_{i \in I} x_i, u\rangle + c)^l = 0.\end{dmath*}
But writing $a_0 = \langle x_0, u\rangle + c, a_i = \langle x_i, u \rangle$ for $i = 1, \dots, m+1$, we see that it is sufficient to show 
\begin{dmath*}\sum_{I \subset [m+1]} (-1)^{|I|} \left(a_0 + \sum_{i\in I} a_i\right)^l = 0\end{dmath*}
for all $a_0, a_1, \dots, a_{m+1} \in \mathbb{R}, l \in [m+1]$. This is the content of the next lemma.\\
\begin{lemma}\label{lemmaIdCube}Let $l, m \in \mathbb{N}, l \leq m$ and $a_0, a_1, \dots, a_{m+1} \in\mathbb{R}$. Then 
\begin{dmath*}\sum_{I \subset [m+1]} (-1)^{|I|} \left(a_0 + \sum_{i\in I} a_i\right)^l = 0.\end{dmath*}
\end{lemma}
\begin{proof}[Proof of Lemma~\ref{lemmaIdCube}.]
Note that 
\begin{dmath*}\sum_{I \subset [m+1]} (-1)^{|I|} \left(a_0 + \sum_{i\in I} a_i\right)^l
= \sum_{k = 0}^l \binom{l}{k}a_0^k \sum_{I \in [m+1]} (-1)^{|I|} \left(\sum_{i\in I} a_i\right)^k\end{dmath*}
thus we only need to consider the case $a_0 = 0$.\\
Consider the expression
\begin{dmath*}\sum_{I \in [m+1]} (-1)^{|I|} \left(\sum_{i\in I} a_i\right)^l\end{dmath*}
as a polynomial of degree $l$ in $a_1, \dots, a_{m+1}$. The coefficient of $a_1^{d_1} a_2^{d_2}\dots a_{m+1}^{d_{m+1}}$ is \begin{dmath*}\binom{l}{d_1,d_2, \dots, d_{m+1}} \sum_{S \subset I \subset [m+1]} (-1)^{|I|},\end{dmath*}
where $S$ is the set of indices $i$ such that $d_i > 0$. Since $|S| \leq m$, the sum $\sum_{S \subset I \subset [m+1]} (-1)^{|I|}$ is zero, which finishes the proof.\end{proof}
Applying the Lemma~\ref{lemmaIdCube} completes the proof.\end{proof}

Before coming to a key proposition which describes the $\mathcal{F}_{N,d,m}-$incident sets, we introduce a couple of pieces of notation. If $f$ is a function from a set of size at most $m$ to a set $X$, we say that $f$ is a \emph{$(\leq m)$-function} to $X$. Given a $(\leq m)$-function $f:A\to[N]$ and $x \in \mathbb{R}^N$ we write $f(x) = \prod_{a \in A} x_{f(a)}$. Here we allow an `empty' function, i.e. a function $f$ from an empty set to $[N]$, defining $f(x) = 1$, for all $x \in \mathbb{R}^N$.

\begin{proposition}\label{incCritBasic}Let $r,d,m,N \in \mathbb{N}$, suppose $r \leq d$ and let $X = \{x_0, x_1, \dots, x_r\} \subset \mathbb{R}^N$. The following are equivalent.
\begin{description}
\item[(i)] $X$ is $\mathcal{F}_{N,d,m}$-incident.
\item[(ii)] Given any $(\leq m)-$functions $f_1, f_2, \dots, f_r$ to $[N]$, the vectors
$$\begin{pmatrix} f_1(x_0)\\ f_2(x_0) \\ \vdots \\f_r(x_0)\end{pmatrix}, \begin{pmatrix} f_1(x_1)\\ f_2(x_1) \\ \vdots \\f_r(x_1)\end{pmatrix}, \dots, \begin{pmatrix} f_1(x_r)\\ f_2(x_r) \\ \vdots \\f_r(x_r)\end{pmatrix}$$
are affinely dependent.
\end{description}
\end{proposition}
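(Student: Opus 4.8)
The plan is to factor every member of $\mathcal{F}_{N,d,m}$ through the \emph{monomial embedding}. Let $\Phi:\mathbb{R}^N\to\mathbb{R}^M$ send $x$ to the tuple $(\mu(x))_\mu$, where $\mu$ ranges over the monomials of degree between $1$ and $m$ in $x_1,\dots,x_N$; note that the values $g(x)=\prod_{a\in A}x_{g(a)}$ of the nonempty $(\leq m)$-functions $g$ to $[N]$ are exactly these monomials. Expanding $(\langle x,u_i\rangle+c_i)^l$ by the binomial and multinomial theorems shows that each coordinate of an $f\in\mathcal{F}_{N,d,m}$ is a polynomial of degree $\leq m$ in $x$, so $f(x)=b+B\,\Phi(x)$ for some constant $b\in\mathbb{R}^d$ and some $d\times M$ matrix $B$ depending only on $(u_i,c_i,l)$; that is, $f$ is an affine function of $\Phi$. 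This is the organizing observation: I will show that (ii) is exactly the statement that $\Phi(x_0),\dots,\Phi(x_r)$ are affinely dependent, and both implications then follow from how affine maps interact with affine dependence.

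For the reformulation of (ii), fix $(\leq m)$-functions $f_1,\dots,f_r$; the vectors displayed in (ii) are the projections of the points $\Phi(x_0),\dots,\Phi(x_r)$ onto the coordinates picked out by $f_1,\dots,f_r$ (an empty $f_t$ merely contributing a constant coordinate that drops out of every difference), and their affine dependence is exactly the vanishing of the minor $\det\big(f_t(x_j)-f_t(x_0)\big)_{t,j\in[r]}$. Hence (ii) asserts that every $r\times r$ minor of the difference matrix $\big(g(x_j)-g(x_0)\big)$ (rows indexed by monomials $g$, columns by $j\in[r]$) vanishes; choosing an empty or repeated $f_t$ only produces trivially vanishing minors. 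By the rank characterization this holds if and only if the vectors $\Phi(x_j)-\Phi(x_0)$ are linearly dependent, i.e. if and only if $\Phi(x_0),\dots,\Phi(x_r)$ are affinely dependent. In particular (ii)$\Rightarrow$(i) is immediate: affine maps send affinely dependent points to affinely dependent points, and each $f$ is affine in $\Phi$.

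It remains to prove (i)$\Rightarrow$(ii), which I do by contraposition. Assume $\Phi(x_0),\dots,\Phi(x_r)$ are affinely independent, so the differences $\Phi(x_j)-\Phi(x_0)$ ($j\in[r]$) are linearly independent; let $W$ be their $r$-dimensional span. (By Lemma~\ref{restrLemma} there is even a set of $r$ monomials on which they remain independent, recovering concrete $f_1,\dots,f_r$ with nonzero minor, though I only need $W$.) I will build $f\in\mathcal{F}_{N,d,m}$ with common exponent $l=m$ whose image points are affinely independent, contradicting (i). For a single candidate coordinate the constant term cancels in the difference, giving $(\langle x_j,u\rangle+c)^m-(\langle x_0,u\rangle+c)^m=\langle a(u,c),\,\Phi(x_j)-\Phi(x_0)\rangle$, where $a(u,c)\in\mathbb{R}^M$ is the coefficient vector of the expansion. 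Thus the $d\times r$ matrix of differences of $f$ factors as $A\,D$, where the rows of $A$ are the vectors $a(u_i,c_i)$ and the columns of $D$ are the vectors $\Phi(x_j)-\Phi(x_0)$; since $D$ has rank $r$, it suffices to choose the rows so that $A$ is injective on $W$, i.e. so that the functionals $\langle a(u_i,c_i),\cdot\rangle$ restricted to $W$ span $W^\ast$.

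The crux, and the step I expect to be the main (if standard) obstacle, is the spanning fact that the powers $(\langle x,u\rangle+c)^m$, as $u\in\mathbb{R}^N$ and $c\in\mathbb{R}$ vary, span the whole space of polynomials of degree $\leq m$ over $\mathbb{R}$; equivalently, the vectors $a(u,c)$ span $\mathbb{R}^M$. This is the classical polarization statement: homogenising via an extra variable set equal to $1$, it reduces to the fact that $m$-th powers of linear forms span the homogeneous polynomials of degree $m$ in $N+1$ variables in characteristic $0$. Granting it, the restrictions to $W$ of the functionals $\langle a(u,c),\cdot\rangle$ span $W^\ast$, so I can pick $r\leq d$ of them forming a basis of $W^\ast$, assign the corresponding powers to $r$ coordinates of $f$, and fill the remaining $d-r$ coordinates with the zero power (taking $u=0,\,c=0$). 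Then $A$ is injective on $W$, whence $\mathrm{rank}(A\,D)=\dim A(W)=r$ and $f(x_0),\dots,f(x_r)$ are affinely independent, contradicting (i). Besides the polarization input, the only thing to watch is the bookkeeping that a single common exponent $l=m$ already exposes every monomial of degree $\leq m$ after expansion, so the rigid form of $\mathcal{F}_{N,d,m}$ loses no information about lower-degree monomials.
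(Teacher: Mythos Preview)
Your proof is correct and takes a genuinely different, more conceptual route than the paper. The paper argues by direct polynomial manipulation: it writes the affine-dependence condition as the vanishing of a determinant, expands it first as a polynomial in the constants $c_1,\dots,c_r$ and then as a polynomial in the coordinates $u_{ij}$, and uses at each stage that a real polynomial vanishing identically must have all coefficients zero; this peels off the parameters one layer at a time until only the $(\leq m)$-function condition remains. You instead factor everything through the Veronese-type embedding $\Phi$, identify (ii) with affine dependence of $\Phi(x_0),\dots,\Phi(x_r)$ via the minor/rank characterisation, and dispatch the nontrivial direction with the classical polarization fact that $m$-th powers of affine linear forms span all polynomials of degree $\leq m$. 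The paper's double expansion is effectively an ad hoc proof of this polarization statement, so the two arguments are closely related underneath; your version makes the structure explicit and is shorter, at the cost of invoking polarization as a black box, whereas the paper's computation is fully self-contained. One small wording quibble: ``the zero power'' for the padding coordinates is slightly misleading, since you are taking $(0+0)^m$ with the common exponent $l=m$, not exponent zero; the parenthetical $u=0,\,c=0$ makes your intent clear.
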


The proof of the proposition is a straightforward algebraic manipulation, mostly based on the fact that if a polynomial over the reals vanishes everywhere, its coefficients are zero. The reader may consider skipping the proof in the first reading. 

\begin{proof}Start from the definition, \textbf{(i)} is equivalent to the vectors
$$\begin{pmatrix} (\langle x_0, u_1 \rangle + c_1)^l\\ (\langle x_0, u_2 \rangle + c_2)^l\\ \vdots \\ (\langle x_0, u_d \rangle + c_d)^l  \end{pmatrix}, \begin{pmatrix} (\langle x_1, u_1 \rangle + c_1)^l\\ (\langle x_1, u_2 \rangle + c_2)^l\\ \vdots \\ (\langle x_1, u_d \rangle + c_d)^l  \end{pmatrix}, \dots, \begin{pmatrix} (\langle x_r, u_1 \rangle + c_1)^l\\ (\langle x_r, u_2 \rangle + c_2)^l\\ \vdots \\ (\langle x_r, u_d \rangle + c_d)^l  \end{pmatrix}$$
being affinely dependent for any choice of parameters $c_1, c_2, \dots, c_d \in \mathbb{R}$, $u_1, u_2, \dots, u_d \in \mathbb{R}^N$ and $1 \leq l \leq m$. In particular, as $r \leq d$, this is further equivalent to vectors
$$\begin{pmatrix} (\langle x_1, u_1 \rangle + c_1)^l - (\langle x_0, u_1 \rangle + c_1)^l\\ (\langle x_1, u_2 \rangle + c_2)^l - (\langle x_0, u_2 \rangle + c_2)^l\\ \vdots \\ (\langle x_r, u_r \rangle + c_r)^l-(\langle x_0, u_r \rangle + c_r)^l  \end{pmatrix}, \begin{pmatrix} (\langle x_2, u_1 \rangle + c_1)^l - (\langle x_0, u_1 \rangle + c_1)^l\\ (\langle x_2, u_2 \rangle + c_2)^l - (\langle x_0, u_2 \rangle + c_2)^l\\ \vdots \\ (\langle x_r, u_r \rangle + c_r)^l - (\langle x_0, u_r \rangle + c_r)^l  \end{pmatrix}, \dots, \begin{pmatrix} (\langle x_r, u_1 \rangle + c_1)^l - (\langle x_0, u_1 \rangle + c_1)^l\\ (\langle x_r, u_2 \rangle + c_2)^l - (\langle x_0, u_2 \rangle + c_2)^l\\ \vdots \\ (\langle x_r, u_r \rangle + c_r)^l-(\langle x_0, u_r \rangle + c_r)^l \end{pmatrix}$$
being linearly dependent for all choices of parameters. Hence, taking determinant, \textbf{(i)} is the same as
$$\det \left((\langle x_i, u_j \rangle + c_j)^l - (\langle x_0, u_j \rangle + c_j)^l\right) = 0$$
for any choice of $u_1, \dots, u_r, c_1, \dots, c_r, l$. Expanding, we obtain
\begin{dmath*}0 = \sum_{\pi \in S_r} sgn(\pi) \prod_{i=1}^r \left((\langle x_{\pi(i)}, u_i \rangle + c_i)^l - (\langle x_0, u_i \rangle + c_i)^l\right)
= \sum_{\pi \in S_r} sgn(\pi) \prod_{i=1}^r \left(\sum_{k = 0}^l c_i^k \binom{l}{k} \left(\langle x_{\pi(i)}, u_i \rangle^{l-k} - \langle x_0, u_i \rangle^{l-k}\right)\right)
=  \sum_{0 \leq k_1, k_2, \dots, k_r \leq l} c_1^{k_1} c_2^{k_2} \dots c_r^{k_r} \prod_{i=1}^r \binom{l}{k_i} \left(\sum_{\pi \in S_r} sgn(\pi) \prod_{i=1}^r \left(\langle x_{\pi(i)}, u_i \rangle^{l-k_i} - \langle x_0, u_i \rangle^{l-k_i}\right) \right) 
\end{dmath*}
However, this holds for any choice of $c_1, c_2, \dots, c_r \in \mathbb{R}$, so, when the expression above is viewed as a polynomial in variables $c_1, c_2, \dots, c_r$, we conclude that the coefficients are zero. In other words, \textbf{(i)} is equivalent to the following. For any $0 \leq k_1, k_2, \dots, k_r \leq m$, and any $u_1, u_2, \dots, u_r \in \mathbb{R}^N$ we have
\begin{dmath*}0 = \sum_{\pi \in S_r} sgn(\pi) \prod_{i=1}^r \left(\langle x_{\pi(i)}, u_i \rangle^{k_i} - \langle x_0, u_i \rangle^{k_i}\right)
= \sum_{\pi \in S_r} sgn(\pi) \prod_{i=1}^r \left((\sum_{j = 1}^N x_{\pi(i) j} u_{ij})^{k_i} -(\sum_{j = 1}^N x_{0 j} u_{ij})^{k_i}\right)
= \sum_{\pi \in S_r} sgn(\pi) \prod_{i=1}^r \left(\sum_{f:[k_i] \to [N]} \left(\prod_{j=1}^{k_i} x_{\pi(i) f(j)} u_{i f(j)} - \prod_{j=1}^{k_i} x_{0 f(j)} u_{i f(j)}\right) \right)
= \sum_{\pi \in S_r} sgn(\pi) \prod_{i=1}^r \left(\sum_{f:[k_i] \to [N]} \left(\prod_{j=1}^{k_i} u_{if(j)}\right)\left(\prod_{j=1}^{k_i} x_{\pi(i) f(j)} - \prod_{j=1}^{k_i} x_{0 f(j)}\right)\right)
= \sum_{f_1:[k_1]\to[N], \dots, f_r:[k_r] \to [N]} \left(\prod_{i=1}^r \prod_{j=1}^{k_i} u_{i f_i(j)}\right)\left(\sum_{\pi \in S_r} sgn(\pi) \prod_{i=1}^r\left(\prod_{j=1}^{k_i} x_{\pi(i) f_i(j)} - \prod_{j=1}^{k_i} x_{0 f_i(j)}\right)\right) 
= \sum_{f_1:[k_1]\to[N], \dots, f_r:[k_r] \to [N]} \left(\prod_{i=1}^r f_i(u_i) \right) \left(\sum_{\pi \in S_r} sgn(\pi) \prod_{i=1}^r (f_i(x_{\pi(i)}) - f_i(x_0))\right)
\end{dmath*}    
Now, look at the expression above as a polynomial in variables $u_{ij}$. Observe that if $f_1, f_2, \dots, f_r, g_1, g_2, \dots, g_r$ are such that $\prod_{i=1}^r f_i(u_i) = \prod_{i=1}^r g_i(u_i)$ as formal expressions, then we must have $\sum_{\pi \in S_r} sgn(\pi) \prod_{i=1}^r (f_i(x_{\pi(i)}) - f_i(x_0)) = \sum_{\pi \in S_r} sgn(\pi) \prod_{i=1}^r (g_i(x_{\pi(i)}) - g_i(x_0))$ as well. This tells us that the coefficients of our polynomial are positive integer multiples of $\sum_{\pi \in S_r} sgn(\pi) \prod_{i=1}^r (f_i(x_{\pi(i)}) - f_i(x_0))$. Also, the polynomial over $\mathbb{R}$ vanishes everywhere iff its coefficients are zero. Therefore, \textbf{(i)} holds iff for all $(\leq m)$-functions $f_1, f_2, \dots, f_r$ to $[N]$, we have
\begin{dmath*}0 = \sum_{\pi \in S_r} sgn(\pi) \prod_{i=1}^r (f_i(x_{\pi(i)}) - f_i(x_0))
= \det_{1\leq i,j \leq r}(f_i(x_j) - f_i(x_0))
\end{dmath*}
which says precisely that the vectors
$$\begin{pmatrix} f_1(x_1) - f_1(x_0) \\ f_2(x_1) - f_2(x_0) \\ \vdots \\ f_r(x_1) - f_r(x_0) \end{pmatrix}, \begin{pmatrix} f_1(x_2) - f_1(x_0) \\ f_2(x_2) - f_2(x_0) \\ \vdots \\ f_r(x_2) - f_r(x_0) \end{pmatrix}, \dots, \begin{pmatrix} f_1(x_r) - f_1(x_0) \\ f_2(x_r) - f_2(x_0) \\ \vdots \\ f_r(x_r) - f_r(x_0) \end{pmatrix}$$
are linearly dependent, which is equivalent to \textbf{(ii)}, as desired.\end{proof}

\begin{proposition}\label{incCriterion}Let $r,d,m,N \in \mathbb{N}$ and suppose $r \leq d$. Suppose that $\{x_0, x_1, \dots, x_r\} \subset \mathbb{R}^N$ is $\mathcal{F}_{N,d,m}$-incident. Then, given any affine map $\alpha:\mathbb{R}^N\to \mathbb{R}^N$ and any $(\leq m)-$functions $f_1, f_2, \dots, f_r$ to $[N]$, the vectors
$$\begin{pmatrix} f_1(\alpha(x_0))\\ f_2(\alpha(x_0)) \\ \vdots \\f_r(\alpha(x_0))\end{pmatrix}, \begin{pmatrix} f_1(\alpha(x_1))\\ f_2(\alpha(x_1)) \\ \vdots \\f_r(\alpha(x_1))\end{pmatrix}, \dots, \begin{pmatrix} f_1(\alpha(x_r))\\ f_2(\alpha(x_r)) \\ \vdots \\f_r(\alpha(x_r))\end{pmatrix}$$
are affinely dependent.\\
On the other hand, if $\{x_0, x_1, \dots, x_r\} \subset \mathbb{R}^N$ is not $\mathcal{F}_{N,d,m}$-incident, then, given any affine isomorphism $\alpha:\mathbb{R}^N\to \mathbb{R}^N$, we may find $(\leq m)-$functions $f_1, f_2, \dots, f_r$ to $[N]$, so that the vectors
$$\begin{pmatrix} f_1(\alpha(x_0))\\ f_2(\alpha(x_0)) \\ \vdots \\f_r(\alpha(x_0))\end{pmatrix}, \begin{pmatrix} f_1(\alpha(x_1))\\ f_2(\alpha(x_1)) \\ \vdots \\f_r(\alpha(x_1))\end{pmatrix}, \dots, \begin{pmatrix} f_1(\alpha(x_r))\\ f_2(\alpha(x_r)) \\ \vdots \\f_r(\alpha(x_r))\end{pmatrix}$$
are affinely independent.\\
\end{proposition}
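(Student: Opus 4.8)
The plan is to prove a single structural statement—that $\mathcal{F}_{N,d,m}$-incidence is preserved under affine maps—and then read off both halves of the proposition from Proposition~\ref{incCritBasic}. The engine behind everything is the observation that precomposing a $(\leq m)$-function with an affine map keeps us inside the span of $(\leq m)$-functions. Concretely, writing $\alpha(x) = Tx + b$ and $f_i : D_i \to [N]$ with $|D_i| \leq m$, we have
$$f_i(\alpha(x)) = \prod_{a \in D_i}\Big(\sum_{l=1}^N T_{f_i(a),l}\,x_l + b_{f_i(a)}\Big),$$
a product of at most $m$ affine forms in $x$. Expanding this product, every monomial that occurs has degree at most $m$, hence equals $g(x)$ for some $(\leq m)$-function $g$ to $[N]$, multiplied by a coefficient $c_{i,g}$ that depends on $\alpha$ and $f_i$ but not on $x$. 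Thus $f_i \circ \alpha = \sum_g c_{i,g}\, g$ is a finite linear combination of $(\leq m)$-functions.

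For the first assertion, assume $X$ is $\mathcal{F}_{N,d,m}$-incident. By Proposition~\ref{incCritBasic} (in the determinant form established in its proof), $\det_{1\le i,j\le r}\big(g_i(x_j) - g_i(x_0)\big) = 0$ for every choice of $(\leq m)$-functions $g_1, \dots, g_r$. Equivalently, the matrix $G$ whose rows are indexed by all $(\leq m)$-functions $g$ to $[N]$, whose columns are indexed by $j \in \{1, \dots, r\}$, and whose entries are $G_{g,j} = g(x_j) - g(x_0)$, has $\rk G < r$: otherwise some $r$ of its rows would form a nonsingular submatrix, whose determinant is exactly one of the vanishing quantities above. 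Now fix $(\leq m)$-functions $f_1, \dots, f_r$ and an affine map $\alpha$, and form the $r \times r$ matrix $M$ with $M_{ij} = f_i(\alpha(x_j)) - f_i(\alpha(x_0))$. Using the expansion of $f_i \circ \alpha$, we get $M_{ij} = \sum_g c_{i,g}\big(g(x_j) - g(x_0)\big)$, i.e. $M = CG$ where $C_{i,g} = c_{i,g}$. Hence $\rk M \le \rk G < r$, so $\det M = 0$, which is precisely the affine dependence of the vectors $\big(f_i(\alpha(x_j))\big)_i$ for $j = 0, \dots, r$.

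For the second assertion, I would apply the first part to the set $\alpha(X) = \{\alpha(x_0), \dots, \alpha(x_r)\}$ together with the affine map $\alpha^{-1}$, which exists and is affine because $\alpha$ is an affine isomorphism. The first part says that if $\alpha(X)$ were $\mathcal{F}_{N,d,m}$-incident, then for all $(\leq m)$-functions $f_1, \dots, f_r$ the vectors $\big(f_i(\alpha^{-1}(\alpha(x_j)))\big)_i = \big(f_i(x_j)\big)_i$ would be affinely dependent, which by Proposition~\ref{incCritBasic} would force $X$ to be $\mathcal{F}_{N,d,m}$-incident. Taking the contrapositive: since $X$ is not incident, $\alpha(X)$ is not incident, so by the negation of condition \textbf{(ii)} in Proposition~\ref{incCritBasic} there exist $(\leq m)$-functions $f_1, \dots, f_r$ making $\big(f_i(\alpha(x_j))\big)_i$ affinely independent, as required.

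The only genuinely delicate point is the opening expansion $f_i \circ \alpha = \sum_g c_{i,g}\, g$: one must check that a product of at most $m$ affine forms expands into monomials of degree at most $m$ (so that every term is again a legitimate $(\leq m)$-function) and that the coefficients $c_{i,g}$ are independent of $x$. This is the affine analogue of ``degree is preserved,'' and once it is in place the remainder is the rank inequality $\rk(CG) \le \rk(G)$ plus a single application of the already-proved Proposition~\ref{incCritBasic}.
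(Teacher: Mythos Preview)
Your proof is correct, and the second half matches the paper's argument almost verbatim. For the first half you take a genuinely different route. The paper observes in one line that the family $\mathcal{F}_{N,d,m}$ itself is closed under precomposition by affine maps: writing $\alpha = A + v$, one has $(\langle \alpha(x), u_i\rangle + c_i)^l = (\langle x, A^T u_i\rangle + (\langle v,u_i\rangle + c_i))^l$, so $f\circ\alpha \in \mathcal{F}_{N,d,m}$ whenever $f \in \mathcal{F}_{N,d,m}$. This immediately gives that $\alpha(X)$ is $\mathcal{F}_{N,d,m}$-incident, and Proposition~\ref{incCritBasic} applied to $\alpha(X)$ finishes. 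You instead work one level down, at the $(\leq m)$-functions themselves: you expand $f_i\circ\alpha$ as a linear combination of $(\leq m)$-functions and push the conclusion through via the rank inequality $\rk(CG)\le\rk(G)$. Both approaches ultimately encode the same fact (affine maps preserve polynomial degree), but the paper's version is shorter because it exploits the particular shape of the generators of $\mathcal{F}_{N,d,m}$ and never needs to expand anything, while your version is more robust in that it would still work for any family whose Proposition~\ref{incCritBasic}-style characterization is in terms of monomials of bounded degree.
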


\begin{proof}
Given arbitrary affine map $\alpha:\mathbb{R}^N \to \mathbb{R}^N$, written in the form $\alpha = A + v$ for $N\times N$ matrix $A$ and a vector $v \in \mathbb{R}^N$, vectors $u_1, u_2, \dots, u_r \in \mathbb{R}^N$, constants $c_1, c_2, \dots, c_r \in \mathbb{R}$ and $1 \leq l \leq m$, we have 
$$(\langle \alpha(x), u_i \rangle + c_i)^l = (\langle Ax + v, u_i\rangle + c_i)^l = (\langle x, A^Tu \rangle + (\langle v, u_i \rangle + c_i))^l.$$
But then, since $x_0, x_1, \dots, x_r$ is $\mathcal{F}_{N,d,m}-$incident, it follows that so is $\alpha(x_0), \alpha(x_1), \dots, \alpha(x_r)$. Apply the the Proposition~\ref{incCritBasic} to $\alpha(x_0), \alpha(x_1), \dots, \alpha(x_r)$, from which the first claim in the proposition follows.\\

For the second part, observe that if $\alpha(x_0), \alpha(x_1), \dots, \alpha(x_r)$ is $\mathcal{F}_{N,d,m}-$incident, then by the previous arguments, so is $x_0 = \alpha^{-1}(\alpha(x_0)), \alpha^{-1}(\alpha(x_1)), \dots, \alpha^{-1}(\alpha(x_r))$. Therefore, $\alpha(x_0), \alpha(x_1), \dots, \alpha(x_r)$ is not $\mathcal{F}_{N,d,m}-$incident. The Proposition~\ref{incCritBasic} applies, and gives the desired $(\leq m)-$functions.\end{proof}

\section{Proof of $\alpha(n,d) = o_d(n)$}
\begin{lemma}\label{algebraicLemma1}Let $m, r, N \in \mathbb{N}$. Suppose that $y_1, y_2, \dots, y_r \in \mathbb{R}^N$ are vectors such that $rank\{y_1, y_2, \dots, y_r\} + m - 1 \geq r$. Suppose further that $y_1, y_2, \dots, y_r$ are distinct and have non-zero coordinates. Then we may find $(\leq m)-$functions $f_1, f_2, \dots, f_r$ for which the vectors
\begin{dmath*}\begin{pmatrix} f_1(y_1)\\ f_2(y_1) \\ \vdots \\f_r(y_1)\end{pmatrix}, \begin{pmatrix} f_1(y_2)\\ f_2(y_2) \\ \vdots \\f_r(y_2)\end{pmatrix}, \dots, \begin{pmatrix} f_1(y_r)\\ f_2(y_r) \\ \vdots \\f_r(y_r)\end{pmatrix}\end{dmath*}
are linearly independent.\end{lemma}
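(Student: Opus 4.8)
The plan is to reduce the lemma to a purely geometric covering problem and then solve that by a short induction.

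\textbf{Reduction to test functions.} The first observation is that any product $h(x)=\prod_{k=1}^{t}(\langle a_k,x\rangle+b_k)$ of $t\le m$ affine-linear forms on $\mathbb{R}^N$ expands into a linear combination of $(\le m)$-functions: each term of the expansion is a constant times a product of at most $t\le m$ coordinates, i.e.\ a constant times $f(x)$ for some $(\le m)$-function $f$, with the empty function accounting for the purely constant term. Hence it suffices to produce $r$ such products $h_1,\dots,h_r$ for which $\big(h_i(y_j)\big)_{1\le i,j\le r}$ is nonsingular. Indeed, writing $h_i=\sum_f C_{if}\,f$ and letting $F$ be the matrix with rows indexed by $(\le m)$-functions $f$ and $(f,j)$-entry $f(y_j)$, we have $(h_i(y_j))=CF$, so $F$ has rank $r$; its $r$ columns $\big(f(y_j)\big)_f$ are then linearly independent, and Lemma~\ref{restrLemma} lets us select $r$ rows, i.e.\ $(\le m)$-functions $f_1,\dots,f_r$, for which $\big(f_i(y_j)\big)$ is nonsingular. (Only distinctness of the $y_j$ enters this route, not the nonzero-coordinate hypothesis.)

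\textbf{Triangular construction.} I would build $h_1,\dots,h_r$ so that $\big(h_i(y_j)\big)$ is triangular with nonzero diagonal. First reorder the $y_j$ greedily, increasing the affine dimension whenever possible, so that $d_i:=\dim\operatorname{aff}\{y_1,\dots,y_i\}=\min(i-1,s')$, where $s':=\dim\operatorname{aff}\{y_1,\dots,y_r\}$. For each $i$ I seek $h_i=\prod_k\ell_{i,k}$, a product of affine forms whose zero-hyperplanes $\{\ell_{i,k}=0\}$ cover $\{y_1,\dots,y_{i-1}\}$ yet all avoid $y_i$; this forces $h_i(y_j)=0$ for $j<i$ and $h_i(y_i)\ne0$. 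It then only remains to bound the number of factors by $m$.

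\textbf{The covering estimate (main work).} The key claim is that any finite set $S$ of points distinct from a point $p$ can be covered by at most $|S|-\dim\operatorname{aff}(S\cup\{p\})+1$ hyperplanes none of which contains $p$; I would prove this by induction on $|S|$. For the step, set $d:=\dim\operatorname{aff}(S\cup\{p\})$, choose $d$ points of $S$ that are affinely independent together with $p$, and take a hyperplane through them: it misses $p$ and contains at least $d$ points of $S$, so deleting $S\cap H$ leaves at most $|S|-d$ points, to which induction applies. Applied with $S=\{y_1,\dots,y_{i-1}\}$ and $p=y_i$, this covers by at most $(i-1)-d_i+1=\max(1,\,i-s')$ hyperplanes. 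Since $s'\ge \rk\{y_1,\dots,y_r\}-1$ and $\rk\{y_1,\dots,y_r\}+m-1\ge r$, we get $i-s'\le r-s'\le m$ for all $i\le r$, so every $h_i$ has degree $\le m$, which completes the construction and hence the lemma.

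\textbf{Where the difficulty lies.} No single step is deep; the delicate point is matching the covering count to the hypothesis $\rk+m-1\ge r$ \emph{exactly}. This rests on two bookkeeping facts: the greedy ordering, which keeps each prefix at affine dimension $\min(i-1,s')$ so that the covering bound $\max(1,i-s')$ never beats the worst case $r-s'$; and the inequality $s'\ge\rk-1$ relating the affine dimension of the points to their linear rank. I expect these inequalities, rather than the covering induction itself, to require the most care, and one should double-check the edge case $r=\rk$ (where $m=1$ and every $h_i$ is a single affine form), which the same argument handles.
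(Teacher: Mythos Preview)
Your argument is correct and takes a genuinely different route from the paper's. The paper proceeds by a direct double induction, first on $m$ and then on $r$: in the outer step it distinguishes whether the new vector $y_{r+1}$ lies in the span of the previous ones, and in the latter case it invokes the induction on $m$ to obtain $(\le m-1)$-functions for $y_1,\dots,y_r$, then tries to extend one of them by a single extra coordinate; the distinctness and nonzero-coordinate hypotheses are used precisely to force a contradiction if no such extension works. Your approach instead passes through products of affine forms, builds a lower-triangular matrix $(h_i(y_j))$ via a hyperplane-covering lemma, and only at the end selects actual $(\le m)$-functions by the row-extraction Lemma~\ref{restrLemma}. This is more geometric and arguably cleaner: the single covering bound $|S|-\dim\operatorname{aff}(S\cup\{p\})+1$ replaces the two-case induction, and the greedy ordering makes the degree bookkeeping transparent. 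As you note, your route never uses the nonzero-coordinate hypothesis, so it in fact proves a slight strengthening of the lemma (though in the paper's application this gain is immaterial, since Corollary~\ref{minincline} first applies an affine isomorphism to arrange nonzero coordinates anyway). The paper's induction, on the other hand, stays entirely within the language of $(\le m)$-functions and avoids the detour through affine forms and the final row-selection step.
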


\begin{proof} We prove the lemma by induction, first on $m$, then on $r$. The lemma holds for $m=1$, this just says that for $r$ linearly independent vectors, we may pick $r$ coordinates, so that after restriction the vectors are still linearly independent -- this is precisely the Lemma~\ref{restrLemma}. Suppose now that the claim holds for some $m-1 \geq 1$. For fixed $m$, we prove the lemma by induction on $r \geq 1$. If $r = 1$, then, take $f:[1] \to [N]$, given by $f(1) = 1$, so the vector $(f(y_1))$ is non-zero.\\

Suppose that the claim holds for some $r \geq 1$, and that $\{y_1, y_2, \dots, y_{r+1}\}$ satisfy the conditions of the lemma.\\

\emph{Case 1.} $y_{r+1} \notin span \{y_1, y_2, \dots, y_r\}$. Then $r + 1 \leq rank\{y_1, y_2, \dots, y_{r+1}\} + m - 1 = rank\{y_1, y_2, \dots, y_r\} + m$, hence $rank\{y_1, y_2, \dots, y_r\} + m - 1 \geq r$. By induction hypothesis, we have $(\leq m)$-functions $f_1, f_2, \dots, f_r$ such that 
\begin{dmath*}\begin{pmatrix} f_1(y_1)\\ f_2(y_1) \\ \vdots \\f_r(y_1)\end{pmatrix}, \begin{pmatrix} f_1(y_2)\\ f_2(y_2) \\ \vdots \\f_r(y_2)\end{pmatrix}, \dots, \begin{pmatrix} f_1(y_r)\\ f_2(y_r) \\ \vdots \\f_r(y_r)\end{pmatrix}\end{dmath*}
are linearly independent. Hence, there are unique $\lambda_1, \lambda_2, \dots, \lambda_r \in \mathbb{R}$ such that $f_i(y_{r+1}) = \sum_{j= 1}^r \lambda_j f_i(y_j)$ holds for all $i=1,\dots,r$. But, $y_{r+1}$ is not in the span of $\{y_1, \dots, y_r\}$, and so $y_{r+1} \not= \sum_{j=1}^r \lambda_j y_j$. Hence, we can pick $f_{r+1}:[1]\to[N]$ to be $f(1) = c$, where $c$ is the coordinate such that
$y_{r+1 c} \not= \sum_{j=1}^r \lambda_j y_{j c}$, finishing the proof in this case.\\

\emph{Case 2.} $y_{r+1} \in span \{y_1, y_2, \dots, y_r\}$. Then $r + 1 \leq rank\{y_1, y_2, \dots, y_{r+1}\} + m - 1 = rank\{y_1, y_2, \dots, y_r\} + m-1$, so 
\begin{dmath*}r \leq rank\{y_1, y_2, \dots, y_r\} + m - 2.\end{dmath*}
By induction hypothesis, we have $(\leq m-1)-$functions $f_1, \dots, f_r$ for which 
\begin{dmath*}\begin{pmatrix} f_1(y_1)\\ f_2(y_1) \\ \vdots \\f_r(y_1)\end{pmatrix}, \begin{pmatrix} f_1(y_2)\\ f_2(y_2) \\ \vdots \\f_r(y_2)\end{pmatrix}, \dots, \begin{pmatrix} f_1(y_r)\\ f_2(y_r) \\ \vdots \\f_r(y_r)\end{pmatrix}\end{dmath*}
are linearly independent. As before, there are unique $\lambda_1, \lambda_2, \dots, \lambda_r \in \mathbb{R}$ such that $f_i(y_{r+1}) = \sum_{j= 1}^r \lambda_j f_i(y_j)$ holds for all $i=1,\dots,r$.\\
We try to take $f_{r+1}$ to be some $f_i$ with additional element in the domain, mapped to $c \in [N]$. If this works, we are done. Otherwise, for all $i=1, \dots, r$ and $c \in [N]$, we have $f_i(y_{r+1}) y_{r+1 c} = \sum_{j = 1}^r \lambda_j f_i(y_j) y_{j c}$. Since the coordinates are non-zero, we get 
\begin{dmath*}f_i(y_{r+1}) = \sum_{j = 1}^r \left(\lambda_j y_{j c}/y_{r+1 c} \right)f_i(y_j).\end{dmath*}
But, by uniqueness of $\lambda_j$, we must have $\lambda_j y_{j c}/y_{r+1 c} = \lambda_j$ for all $j, c$. If some $\lambda_j \not= 0$, then for all $c$ we get $y_{j c}/y_{r+1 c} = 1$, i.e. $y_{r+1} = y_j$ which is a contradiction, as our vectors are distinct. Otherwise, all the $\lambda_j = 0$, so $f_1(y_{r+1}) = 0$, but coordinates of $y_{r+1}$ are non-zero, resulting in contradiction once again.\end{proof}

As a corollary of the algebraic lemma above, we have a result that is consistent with the intuition described in the introduction: we expect lines in $[m+1]^N$ to be the sources of non-trivial $\mathcal{F}_{N, m+1, m}-$incident sets. In other words, a $\mathcal{F}_{N, m+1, m}-$incident set is either larger than $m+1$, and thus its image must be affinely dependent (by looking at dimension of the target space), or the set is on a line.

\begin{corollary}\label{minincline}Suppose that $S \subset \mathbb{R}^N$ is $\mathcal{F}_{N, m+1, m}-$incident. Then, $|S| \geq m+2$ and if $|S| = m+2$, then $S$ is a subset of a line.\end{corollary}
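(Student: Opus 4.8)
The plan is to derive both parts by contradiction from Lemma~\ref{algebraicLemma1} together with the characterization in Proposition~\ref{incCritBasic}. Throughout I write $|S| = r+1$ and $S = \{x_0, x_1, \dots, x_r\}$; since $r \le d = m+1$, Proposition~\ref{incCritBasic} applies, and by Proposition~\ref{incCriterion} incidence, distinctness, the rank of the difference set, and collinearity are all invariant under affine isomorphisms. The key observation is that the two parts are distinguished only by the arithmetic of the rank inequality $rank + m - 1 \ge r$ appearing in Lemma~\ref{algebraicLemma1}.

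First I would set up a common reduction. Assuming $S$ is $\mathcal{F}_{N,m+1,m}$-incident, I translate so that $x_0 = 0$ and then apply a generic linear isomorphism: this keeps $x_0 = 0$, keeps $x_1, \dots, x_r$ distinct and non-zero, leaves $rank\{x_1, \dots, x_r\}$ unchanged, and arranges that each $x_j$ with $j \ge 1$ has all coordinates non-zero. By Proposition~\ref{incCritBasic}, to contradict incidence it suffices to find $(\le m)$-functions $f_1, \dots, f_r$ for which the vectors $(f_i(x_j))_{i=1}^r$, for $j = 0, 1, \dots, r$, are affinely \emph{independent}. Because the functions produced by Lemma~\ref{algebraicLemma1} all have non-empty domain, they vanish at the origin, so $f_i(x_0) = f_i(0) = 0$; hence it is enough to make the $r \times r$ matrix $(f_i(x_j))_{1 \le i,j \le r}$ non-singular, which is exactly the conclusion of Lemma~\ref{algebraicLemma1} applied to $y_j = x_j$.

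For the first part, suppose $|S| \le m+1$, i.e. $r \le m$. (The degenerate case $r = 0$ is immediate, since a single point is never affinely dependent.) The vectors $x_1, \dots, x_r$ are distinct with non-zero coordinates, so $rank\{x_1, \dots, x_r\} \ge 1$ and therefore $rank\{x_1, \dots, x_r\} + m - 1 \ge m \ge r$. The hypothesis of Lemma~\ref{algebraicLemma1} is met, producing a non-singular matrix and contradicting incidence; hence $|S| \ge m+2$. For the second part, suppose $|S| = m+2$, so $r = m+1$, and suppose for contradiction that $S$ does not lie on a line. After the reduction, non-collinearity means $rank\{x_1, \dots, x_r\} \ge 2$, whence $rank\{x_1, \dots, x_r\} + m - 1 \ge m + 1 = r$, and Lemma~\ref{algebraicLemma1} again yields a contradiction. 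Therefore $S$ lies on a line, and transporting back through the affine isomorphism (which maps lines to lines) shows the original $S$ is a subset of a line.

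The main point to get right is the bookkeeping that makes Lemma~\ref{algebraicLemma1} directly applicable, rather than any hard computation: the reduction to $x_0 = 0$ so that the ``affinely dependent'' condition of Proposition~\ref{incCritBasic} collapses to ``linearly dependent'' (using that the Lemma's functions are non-empty and hence vanish at the origin), and the generic linear map guaranteeing non-zero coordinates without disturbing the rank. I expect the only genuinely delicate step to be justifying that the functions furnished by Lemma~\ref{algebraicLemma1} indeed have non-empty domain, which one reads off from its inductive construction; with that in hand, the two cases follow purely from whether rank $\ge 1$ (when $r \le m$) or rank $\ge 2$ (when $r = m+1$, supplied exactly by non-collinearity).
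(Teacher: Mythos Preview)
Your proposal is correct and follows essentially the same route as the paper: reduce via an affine isomorphism to $x_0=0$ with the remaining points having non-zero coordinates, then invoke Lemma~\ref{algebraicLemma1} to contradict the characterization of incidence unless the rank condition fails, forcing rank $\le 1$ and hence collinearity. If anything, you are slightly more explicit than the paper on two minor points---separating the cases $|S|\le m+1$ and $|S|=m+2$, and spelling out that the functions produced by Lemma~\ref{algebraicLemma1} are non-empty so that $f_i(0)=0$---both of which the paper leaves somewhat implicit.
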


\begin{proof}If $|S| \geq m+3$, we are done. Suppose now that $|S|\leq m+2$. Let $s_0, s_1, \dots, s_{m+1}$ be the elements of $S$. We can find an affine isomorphism $\alpha:\mathbb{R}^N \to \mathbb{R}^N$ such that $\alpha(s_0) = 0$, and $y_i = \alpha(s_i)$, for $i=1,2,\dots, m+1$, are distinct and have non-zero coordinates. By the Proposition~\ref{incCriterion} (note that we may apply it because $|S|-1 \leq m+1$, and $m+1$ is the dimension of the target space), the vectors
\begin{dmath*}\begin{pmatrix} f_1(y_1)\\ f_2(y_1) \\ \vdots \\f_{m+1}(y_1)\end{pmatrix}, \begin{pmatrix} f_1(y_2)\\ f_2(y_2) \\ \vdots \\f_{m+1}(y_2)\end{pmatrix}, \dots, \begin{pmatrix} f_1(y_{m+1})\\ f_2(y_{m+1}) \\ \vdots \\f_{m+1}(y_{m+1})\end{pmatrix}\end{dmath*}
are linearly dependent, for any choice of $(\leq m)-$functions $f_1, f_2,\dots, f_{m+1}$ to $[N]$. Thus, we obtain a contradiction by the Lemma~\ref{algebraicLemma1}, unless
\begin{dmath*}rank\{y_1, y_2, \dots, y_{m+1}\} + m - 1 \leq m.\end{dmath*}
So $rank\{y_1, y_2, \dots, y_{m+1}\} \leq 1$, and as $y_1 \not = 0$, there are scalars $\lambda_1, \dots, \lambda_{m+1}$ such that $y_i = \lambda_i y_1$ holds for all $i=1,\dots, m+1$. But, since $\alpha$ is an affine isomorphism, the points $s_0 = \alpha^{-1}(0), s_1 = \alpha^{-1}(y_1), \dots, s_{m+1} = \alpha^{-1}(y_{m+1})$ are on a line, as desired.
\end{proof}

\begin{theorem}For $d,n \in \mathbb{N}$, $d \geq 2$, we have $\alpha(n,d) = o_d(n)$.\end{theorem}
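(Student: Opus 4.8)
The plan is to run F\"uredi's density Hales--Jewett argument, but with the generic linear map replaced by the incidence removal function of Proposition~\ref{incremfn} for the degree-$\le m$ family $\mathcal{F}_{N,d,m}$, taking $m = d-1$ throughout. I would work with the grid $X_0 = \{0,1,\dots,d\}^N \subset \mathbb{R}^N$, i.e. the copy of $[d+1]^N$, and use the density Hales--Jewett theorem~\cite{FurstenbergKatznelson, Polymath} over the alphabet $[d+1]$: for each $\epsilon>0$ there is $N_0$ such that for $N \ge N_0$ every subset of $[d+1]^N$ of density at least $\epsilon$ contains a combinatorial line. Such a line has exactly $d+1 = m+2$ points and is of the form $\{x + iy : i = 0,1,\dots,m+1\}$ with $y = \ind{W} \neq 0$ the indicator of its wildcard set --- precisely the shape handled by Lemma~\ref{exInc}(1).

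Since $m = d-1 \ge 1$, the family $\mathcal{F}_{N,d,d-1}$ contains maps with exponent $l=1$ and hence separates the points of $X_0$, so Proposition~\ref{incremfn} provides $f \in span\,\mathcal{F}_{N,d,d-1}$ that is injective on $X_0$ and satisfies property ($\dagger$). Put $X = f(X_0)\subset\mathbb{R}^d$, so $|X| = (d+1)^N =: n$. To see that $X$ is in almost general position, suppose for contradiction that $d+2$ distinct points $f(s_0),\dots,f(s_{d+1})$ lie on a common hyperplane $H$ (affine dimension $d-1$). Deleting any one point leaves $d+1$ points still lying in $H$, hence affinely dependent; by the contrapositive of ($\dagger$) the corresponding $(d+1)$-element preimage is $\mathcal{F}_{N,d,d-1}$-incident. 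Applying Corollary~\ref{minincline} (with $m+1=d$) to each such set of size $m+2 = d+1$ shows that every $(d+1)$-subset of $\{s_0,\dots,s_{d+1}\}$ lies on a line. In particular $\{s_0,\dots,s_d\}$ and $\{s_1,\dots,s_{d+1}\}$ are each collinear, and as $d \ge 2$ they share at least the two points $s_1,s_2$; two distinct points determine a unique line, so the two lines coincide and all of $s_0,\dots,s_{d+1}$ lie on one line. But an affine line is injective in any coordinate where it is non-constant, so it meets $\{0,\dots,d\}^N$ in at most $d+1$ points, contradicting the existence of $d+2$ distinct collinear grid points.

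For the second requirement, I would show that for $N \ge N_0$ no subset of $X$ of size at least $\epsilon n$ is in general position. Given $S \subseteq X$ with $|S| \ge \epsilon n$, the preimage $f^{-1}(S) \subseteq [d+1]^N$ has density at least $\epsilon$, so density Hales--Jewett yields a combinatorial line $L \subseteq f^{-1}(S)$, with $L = \{x+iy:i=0,\dots,d\}$. By Lemma~\ref{exInc}(1) the set $L$ is $span\,\mathcal{F}_{N,d,d-1}$-incident, and since $f \in span\,\mathcal{F}_{N,d,d-1}$ the image $f(L)\subseteq S$ is affinely dependent; injectivity of $f$ makes these $d+1$ distinct cohyperplanar points of $S$, so $S$ is not in general position. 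Hence the largest general-position subset of $X$ has size below $\epsilon n$, i.e. $\alpha((d+1)^N,d) < \epsilon (d+1)^N$ for $N \ge N_0$.

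To reach all $n$, I would run the above with $\epsilon_0 = \epsilon/(d+1)$ in place of $\epsilon$; then for $n \ge (d+1)^{N_0}$, choosing $N$ minimal with $(d+1)^N \ge n$ (so $N \ge N_0$ and $(d+1)^N < (d+1)n$) and taking any $n$-point subset $X' \subseteq X$ works, since passing to a subset preserves almost general position and cannot enlarge a general-position subset, whence the largest general-position subset of $X'$ has size at most $\epsilon_0 (d+1)^N < \epsilon n$. As $\epsilon$ is arbitrary this gives $\alpha(n,d) = o_d(n)$. The density Hales--Jewett input and this final subset reduction are routine; the crux is the almost-general-position verification, where the difficulty is that that notion concerns $d+2$ cohyperplanar points while Corollary~\ref{minincline} concerns $\mathcal{F}$-incident sets of size $m+2 = d+1$. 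Bridging this by deleting a point to expose affinely dependent $(d+1)$-subsets and then forcing all $d+2$ preimages onto a single line (via the overlap of two collinear $(d+1)$-sets and the trivial bound on collinear grid points) is the step I expect to require the most care.
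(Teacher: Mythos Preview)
Your proof is correct and follows essentially the same approach as the paper: apply the incidence removal function for $\mathcal{F}_{N,d,d-1}$ to $[d+1]^N$, use Corollary~\ref{minincline} to force collinearity of preimages of $(d+1)$-subsets on a putative bad hyperplane, and finish via the $d+1$-point bound for lines in the grid together with density Hales--Jewett. Your almost-general-position step is in fact a slight streamlining of the paper's version, since you directly take two overlapping $(d+1)$-subsets of the hyperplane rather than first extracting a maximal affinely independent subset.
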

\begin{proof}Let $\epsilon> 0$ and let $N$ be sufficiently large so that $\epsilon-$density Hales-Jewett theorem holds for combinatorial lines in $[m+2]^N$. Let $X = [m+2]^N$, and let $f$ be a function given by the Proposition~\ref{incremfn} applied to $X$ and $\mathcal{F}_{N,m+1,m}$. Since $\mathcal{F}_{N,m+1,m}$ separates the points of $X$, we may assume that $f$ is injective on $X$. Finally, let $Y = f(X) \subset \mathbb{R}^{m+1}$. We claim that $Y$ has no more than $m+2$ points in a hyperplane, and that all subsets of $Y$ of size at least $\epsilon |Y|$ have a hyperplane with $m+2$ points inside.\\

\emph{There are no more than $m+2$ points of $Y$ on a hyperplane.} Look at a hyperplane $H$ and suppose that $Y$ has $m+3$ points $y_1, \dots, y_{m+3}$ inside $H$. Look at maximal affinely independent subset of $y_1, \dots, y_{m+3}$, w.l.o.g. this is $y_1, y_2, \dots, y_r$ for some $r$. Since $H$ is $m-$dimensional affine subspace, we have $r \leq m+1$. So $S_1 = \{y_1, y_2, \dots, y_r, y_{m+2}\}$ is affinely dependent, and has size at most $m+2$. Then, by definition of $f$ and Proposition~\ref{incremfn}, $T_1 = f^{-1}(S_1)$ is $\mathcal{F}_{N, m+1, m}-$incident. Since $f$ is a bijection from $X$ onto its image, $T_1$ has size at most $m+2$, so by Corollary~\ref{minincline}, $T_1$ is a subset of a line, and $|T_1| = m+2$ and $r = m+1$. Applying the same arguments to $S_2 = \{y_1, \dots, y_r, y_{m+3}\}$ and $T_2 = f^{-1}(S_2)$, we have that $T_2$ is also a subset of a line and has size $m+2$ and also $|T_1 \cap T_2| = m+1$. But, as $T_1, T_2 \subset [m+2]^N$, this is impossible and we have a contradiction, so $Y$ has no more than $m+2$ points on a hyperplane.\\

\emph{Dense subsets of $Y$ are not in general position.} Let $S \subset Y$ have size at least $\epsilon |Y|$. Then $T = f^{-1}(S)$ has a combinatorial line $L$ by the density Hales-Jewett theorem. Hence, $f(L) \subset S$ and $f(S)$ has $m+2$ points that lie on the same hyperplane, by the Lemma~\ref{exInc}. This finishes the proof.\end{proof}

\section{Better bounds for certain dimensions}
In this section, we provide better bounds on $\alpha(n,d)$ for certain dimensions $d$. The key difference in this approach is use of a more efficient version of density Hales-Jewett theorem. 

\begin{theorem}[Generalized Sperner's Theorem,~\cite{Polymath}, Theorem 2.3]\label{sperner} Let $\mathcal{A}$ be a collection of subsets of $[n]$ that contains no $d$-dimensional combinatorial subspace. Then the size of $\mathcal{A}$ is at most $(25/n)^{1/2^d} 2^n$.\end{theorem}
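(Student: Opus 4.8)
The plan is to prove the bound by induction on the dimension $d$, viewing a family $\mathcal{A}$ of subsets of $[n]$ as a subset of the cube $\{0,1\}^n$ of density $\delta = |\mathcal{A}|2^{-n}$, and reading a $d$-dimensional combinatorial subspace as a choice of disjoint non-empty ``wildcard'' blocks $b_1, \dots, b_d \subseteq [n]$ together with a base $c$ disjoint from them, giving the $2^d$ sets $c \cup \bigcup_{i \in S} b_i$ for $S \subseteq [d]$. The base case $d = 1$ is exactly Sperner's theorem: a family with no $1$-dimensional subspace is an antichain, so $|\mathcal{A}| \leq \binom{n}{\lfloor n/2 \rfloor} \leq 5\,n^{-1/2} 2^n = (25/n)^{1/2}2^n$, where the final inequality is a routine estimate valid for all $n \geq 1$ with plenty of room to spare (indeed $\binom{n}{\lfloor n/2\rfloor} \sim \sqrt{2/\pi}\, n^{-1/2} 2^n$).

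For the inductive step I would exploit the multiplicativity of combinatorial subspaces under splitting the ground set. Fix a partition $[n] = X \sqcup Y$ and write points of the cube as pairs $(x,y)$. The key structural observation is that if $\{a, a'\}$ is a comparable pair in $\{0,1\}^X$ (that is, a $1$-dimensional subspace, with block $a' \setminus a$) and $P \subseteq \{0,1\}^Y$ is a $(d-1)$-dimensional subspace, then $\{a, a'\} \times P$ is a $d$-dimensional subspace of $\{0,1\}^n$. Writing $\mathcal{A}_a = \{y : (a,y) \in \mathcal{A}\}$ for the fibre over $a$, it follows that if $\mathcal{A}$ contains no $d$-dimensional subspace then, for every comparable pair $a \subsetneq a'$ in $\{0,1\}^X$, the intersection $\mathcal{A}_a \cap \mathcal{A}_{a'}$ contains no $(d-1)$-dimensional subspace, and hence by the induction hypothesis has density at most $(25/|Y|)^{1/2^{d-1}}$ in $\{0,1\}^Y$.

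The remaining task is to show that this forces $\delta$ to be small, and here the natural quantity to control is the fibre correlation $\mathbb{E}_y[\rho_y^2]$, where $\rho_y$ is the density in $\{0,1\}^X$ of the fibre $\{x : (x,y) \in \mathcal{A}\}$; by Cauchy--Schwarz this is at least $(\mathbb{E}_y \rho_y)^2 = \delta^2$. I would then convert this correlation into a statement about comparable pairs, and \textbf{this is the main obstacle}: the naive expansion $\mathbb{E}_y[\rho_y^2] = \mathbb{E}_{a,a'}[\mathrm{dens}(\mathcal{A}_a \cap \mathcal{A}_{a'})]$ averages over \emph{all} pairs $(a,a')$, whereas the bound from the previous paragraph applies only to comparable pairs, which are a vanishing fraction of all pairs. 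To route the correlation through comparable pairs I would average over a uniformly random maximal chain in $\{0,1\}^X$ (equivalently, use an LYM-type weighting): any two points of such a chain are comparable, a convexity estimate bounds the expected number of chain-points landing in a fibre from below in terms of $\delta$, and the previous paragraph bounds each pairwise contribution from above by $(25/|Y|)^{1/2^{d-1}}$. Balancing these yields a recursion of the shape $\delta^2 \lesssim (25/|Y|)^{1/2^{d-1}}$, which on iterating down to the Sperner base case produces the exponent $1/2^d$. The delicate point --- and the reason a clean absolute constant can appear rather than a $d$-dependent one --- is that the split must be arranged so that $|Y|$ stays essentially as large as $n$, so that the loss of coordinates does not compound across the $d$ levels of the recursion, while $X$ still carries enough Sperner room to force the comparable pairs. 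Controlling this trade-off and the attendant constants is precisely the bookkeeping carried out in \cite{Polymath}, whose argument I would follow to obtain the stated value $25$.
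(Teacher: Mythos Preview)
The paper does not prove this theorem at all: it is quoted verbatim as Theorem~2.3 of~\cite{Polymath} and used as a black box in the proof of the final theorem. There is therefore no ``paper's own proof'' to compare your attempt against.

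As for your sketch itself: the inductive architecture you describe (Sperner as base case, split $[n]=X\sqcup Y$, observe that a comparable pair on $X$ times a $(d-1)$-subspace on $Y$ gives a $d$-subspace, then apply the inductive bound to the fibre intersections $\mathcal{A}_a\cap\mathcal{A}_{a'}$) is indeed the standard route to results of this type, and you correctly identify the genuine difficulty---routing the Cauchy--Schwarz correlation $\mathbb{E}_y[\rho_y^2]\geq\delta^2$ through \emph{comparable} pairs rather than arbitrary pairs, and doing so without losing a factor that compounds over the $d$ levels of induction. However, you do not actually carry out this step: you gesture at an LYM/random-chain argument and then explicitly defer to~\cite{Polymath} for ``the bookkeeping\ldots\ to obtain the stated value $25$''. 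Since that bookkeeping \emph{is} the proof---the heuristic recursion $\delta^2\lesssim(25/|Y|)^{1/2^{d-1}}$ only yields the claimed bound if the splitting and the chain-averaging are arranged so that no extraneous constants accumulate---what you have written is an outline of the strategy rather than a proof, and one that ultimately points back to the same reference the paper cites.
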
  

Here, we consider the points in $\{0,1\}^N \subset \mathbb{R}^N$, which we also interpret as subsets of $[N]$. Observe that, given an $(\leq m)-$function $f$ to $[N]$, with image $S \subset [N]$ and a point $x \in \{0,1\}^N$ corresponding to $X \subset [N]$, we have
$$f(x) = 1_{S\subset X}.$$
Hence, we can reinterpret the Proposition~\ref{incCriterion} in the language of sets as follows. Suppose that $\emptyset, X_1, X_2, \dots, X_r$ correspond to $r+1$ points that are not $\mathcal{F}_{N,d,m}-$incident (so the first point is 0). Then, there are sets $S_1, S_2, \dots, S_r \subset [N]$ of size at most $m$, for which the vectors
\begin{dmath*}\begin{pmatrix}\mathbbm{1}_{S_1 \subset \emptyset}\\\mathbbm{1}_{S_2 \subset \emptyset} \\\vdots\\\mathbbm{1}_{S_r \subset \emptyset}  \end{pmatrix},
\begin{pmatrix}\mathbbm{1}_{S_1 \subset X_1}\\\mathbbm{1}_{S_2 \subset X_1} \\\vdots\\\mathbbm{1}_{S_r \subset X_1}  \end{pmatrix}, \begin{pmatrix}\mathbbm{1}_{S_1 \subset X_2}\\\mathbbm{1}_{S_2 \subset X_2} \\\vdots\\\mathbbm{1}_{S_r \subset X_2}  \end{pmatrix}, \dots, \begin{pmatrix}\mathbbm{1}_{S_1 \subset X_r}\\\mathbbm{1}_{S_2 \subset X_r} \\\vdots\\\mathbbm{1}_{S_r \subset X_r} \end{pmatrix}\end{dmath*}
are affinely independent. If all the sets $S_i$ are non-empty, then the vectors 
\begin{dmath*}\begin{pmatrix}\mathbbm{1}_{S_1 \subset X_1}\\\mathbbm{1}_{S_2 \subset X_1} \\\vdots\\\mathbbm{1}_{S_r \subset X_1}  \end{pmatrix}, \begin{pmatrix}\mathbbm{1}_{S_1 \subset X_2}\\\mathbbm{1}_{S_2 \subset X_2} \\\vdots\\\mathbbm{1}_{S_r \subset X_2}  \end{pmatrix}, \dots, \begin{pmatrix}\mathbbm{1}_{S_1 \subset X_r}\\\mathbbm{1}_{S_2 \subset X_r} \\\vdots\\\mathbbm{1}_{S_r \subset X_r} \end{pmatrix}\end{dmath*}
are linearly independent. Otherwise, w.l.o.g. $S_1 = S_2 = \dots = S_k = \emptyset$ and others are non-empty, so after subtracting the first vector from the others we obtain that 
\begin{dmath*}\begin{pmatrix}0\\0\\\vdots\\0\\\mathbbm{1}_{S_{k+1} \subset X_1} \\\vdots\\\mathbbm{1}_{S_r \subset X_1}  \end{pmatrix},
\begin{pmatrix}0\\0\\\vdots\\0\\\mathbbm{1}_{S_{k+1} \subset X_2} \\\vdots\\\mathbbm{1}_{S_r \subset X_2}  \end{pmatrix},
\dots, \begin{pmatrix}0\\0\\\vdots\\0\\\mathbbm{1}_{S_{k+1} \subset X_r} \\\vdots\\\mathbbm{1}_{S_r \subset X_r}  \end{pmatrix}\end{dmath*}
are linearly independent, which is not possible (when viewed as a matrix, the row rank is less than $r$). This leads us to the following observation.

\begin{observation}\label{setobs}Suppose that the sets $\emptyset, X_1, X_2, \dots, X_r \subset [N]$ correspond to $r+1$ points that are not $\mathcal{F}_{N,d,m}-$incident. Then, there are non-empty sets $S_1, S_2, \dots, S_r \subset \mathbb{N}$ of size at most $m$ such that the vectors
\begin{dmath*}\begin{pmatrix}\mathbbm{1}_{S_1 \subset X_1}\\\mathbbm{1}_{S_2 \subset X_1} \\\vdots\\\mathbbm{1}_{S_r \subset X_1}  \end{pmatrix}, \begin{pmatrix}\mathbbm{1}_{S_1 \subset X_2}\\\mathbbm{1}_{S_2 \subset X_2} \\\vdots\\\mathbbm{1}_{S_r \subset X_2}  \end{pmatrix}, \dots, \begin{pmatrix}\mathbbm{1}_{S_1 \subset X_r}\\\mathbbm{1}_{S_2 \subset X_r} \\\vdots\\\mathbbm{1}_{S_r \subset X_r} \end{pmatrix}\end{dmath*}
are linearly independent.\end{observation}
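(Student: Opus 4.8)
The plan is to read the Observation off directly from the second part of Proposition~\ref{incCriterion}, applied with the identity affine isomorphism, combined with the dictionary $f(x) = \mathbbm{1}_{S \subset X}$ recorded just above its statement. First I would invoke that proposition: since $\emptyset, X_1, \dots, X_r$ are \emph{not} $\mathcal{F}_{N,d,m}$-incident, there exist $(\leq m)$-functions $f_1, \dots, f_r$ to $[N]$ for which the $r+1$ vectors whose $i$-th coordinate is $f_i$ evaluated at each of $\emptyset, X_1, \dots, X_r$ are affinely independent. Writing $S_i$ for the image of $f_i$ (a subset of $[N]$ of size at most $m$), the dictionary rewrites the coordinate $f_i(X_j)$ as $\mathbbm{1}_{S_i \subset X_j}$, so these are precisely the vectors in the Observation, augmented by the extra vector $v_0$ coming from $\emptyset$, whose coordinates are $\mathbbm{1}_{S_i \subset \emptyset}$.

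Next I would convert affine independence into linear independence: the points $v_0, v_1, \dots, v_r$ are affinely independent exactly when the $r$ differences $v_1 - v_0, \dots, v_r - v_0$ are linearly independent. The crux is then a case distinction on whether any $S_i$ is empty. If every $S_i$ is non-empty, then $\mathbbm{1}_{S_i \subset \emptyset} = 0$ for all $i$, so $v_0 = 0$ and linear independence of $v_1 - v_0, \dots, v_r - v_0$ reduces to linear independence of $v_1, \dots, v_r$ — which is exactly the desired conclusion, with non-empty sets $S_1, \dots, S_r$. So it remains only to exclude the possibility that some $S_i$ is empty. Suppose, w.l.o.g., that $S_1 = \dots = S_k = \emptyset$ with $k \geq 1$ and $S_{k+1}, \dots, S_r$ non-empty. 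Via the empty-function convention $f(x) = 1$, an empty $S_i$ contributes a coordinate that is identically $1$ on every point, so the top $k$ coordinates of $v_0, v_1, \dots, v_r$ all equal $1$. Subtracting $v_0$ annihilates these $k$ coordinates in each difference $v_j - v_0$, forcing all $r$ of the (supposedly independent) vectors $v_1 - v_0, \dots, v_r - v_0$ into the $(r-k)$-dimensional coordinate subspace $\{y : y_1 = \dots = y_k = 0\}$. Since $r - k < r$, these cannot be linearly independent, a contradiction.

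The only step requiring care — which I would flag as the essential (if routine) point — is the empty-set case: one must observe that an empty $S_i$ yields a constant coordinate equal to $1$ rather than $0$, so that it truly cancels under affine normalization and produces the dimension drop that forces the contradiction. Everything else is bookkeeping, namely the passage between the evaluations $f_i(X_j)$ and their indicator form $\mathbbm{1}_{S_i \subset X_j}$, and the standard equivalence between affine independence of $r+1$ points and linear independence of their $r$ successive differences. I note that the non-empty sets so obtained are automatically distinct, since equal coordinates would again collapse the vectors into a lower-dimensional subspace, but distinctness is not needed for the statement.
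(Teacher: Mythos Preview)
Your proposal is correct and follows essentially the same line as the paper's own argument preceding the Observation: invoke Proposition~\ref{incCriterion} (equivalently Proposition~\ref{incCritBasic}) to obtain affinely independent vectors, translate via the dictionary $f_i(X_j) = \mathbbm{1}_{S_i \subset X_j}$, and then dispose of the possibility that some $S_i = \emptyset$ by observing that the resulting constant-$1$ coordinates cancel upon subtracting $v_0$, forcing the $r$ differences into an $(r-k)$-dimensional subspace. The paper phrases the final contradiction in terms of row rank of the associated matrix, but this is the same dimension count you give.
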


Viewing these vectors together as an $r\times r$ matrix, we have found that the nullity of this matrix is related to the notion of $\mathcal{F}_{N,d,m}-$incidence. This motivates the study of nullity of such matrices. Before stating the lemma which contains some basic results regarding this problem, we introduce some notation.\\
Given sets $A_1, A_2, \dots, A_r, B_1, B_2, \dots, B_s \in \mathbb{N}^{(<\omega)}$, we write
$$I(A_1, A_2, \dots, A_r; B_1, B_2, \dots, B_s)$$
for the matrix $I_{ij} = \mathbbm{1}_{B_i \subset A_j}$. Further, we define 
$$K(A_1, A_2, \dots, A_r; B_1, B_2, \dots, B_s)$$
as the kernel of $I$ and 
$$n(A_1, A_2, \dots, A_r; B_1, B_2, \dots, B_s)$$ as the nullity of $I$. Also, if $A, B$ are finite sequences of finite sets, of leghts $r$ and $s$, we write $I(A, B) =I(A_1, A_2, \dots, A_r; B_1, B_2, \dots, B_s)$, and similarly we define $K(A,B), n(A,B)$.  

\begin{lemma}\label{algLemmaSets}Let $m, k \in \mathbb{N}$. Given any distinct sets $X_1, X_2, \dots,X_r \in \mathbb{N}^{(<\omega)}$, we can find sets $S_1, S_2, \dots, S_r \subset \mathbb{N}^{(\leq m)}$ which enjoy the following property.
\begin{description}
\item[(i)] $n(X_1, X_2, \dots, X_r; S_1, S_2, \dots, S_r) = 0$, provided $r < 2^{m+1}$.
\item[(ii)] $n(X_1, X_2, \dots, X_r; S_1, S_2, \dots, S_r) \leq 1$, provided $r < 3.2^{m}$. 
\end{description}
\end{lemma}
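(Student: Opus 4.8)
\emph{Reduction to a rank computation.} Since we are free to choose $S_1,\dots,S_r$ (each of size at most $m$, with $S_i=\emptyset$ allowed), and the matrix $I(X_1,\dots,X_r;S_1,\dots,S_r)$ has $i$-th row $\big(\mathbbm{1}_{S_i\subseteq X_j}\big)_{j=1}^r$, the smallest nullity we can possibly arrange is $r-\rho$, where $\rho$ is the rank of the full matrix $M$ whose rows are indexed by \emph{all} sets $S$ with $|S|\le m$ and whose columns are the $X_j$. Concretely, I would take the $S_i$ to index a maximal linearly independent family of rows of $M$ and pad the list arbitrarily up to length $r$. Now $r-\rho=\dim D$, where $D$ is the space of column dependencies: tuples $(c_1,\dots,c_r)$ with $\sum_j c_j\,\mathbbm{1}_{S\subseteq X_j}=0$ for every $|S|\le m$. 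Identifying each point with its $0/1$ indicator vector and writing a dependency as the signed measure $\sum_j c_j\delta_{X_j}$, the condition says exactly that this measure annihilates every multilinear polynomial of degree at most $m$, since the functions $X\mapsto\mathbbm{1}_{S\subseteq X}=\prod_{i\in S}x_i$ span them (the empty set giving the constant $1$). So (i) is the claim $D=0$ when $r<2^{m+1}$, and (ii) is $\dim D\le 1$ when $r<3\cdot 2^m$; both follow from support estimates, because every dependency is supported inside the $r$ distinct points $X_1,\dots,X_r$.

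\emph{Part (i): a minimum-weight bound.} The key lemma I would prove first is that a nonzero $g$ on the cube that annihilates all polynomials of degree $\le m$ has $|\mathrm{supp}\,g|\ge 2^{m+1}$. Fix a coordinate $i$ and split $g$ into its restrictions $g_0,g_1$ to the faces $x_i=0,1$; from $\langle g_0,p\rangle=\langle g,(1-x_i)p\rangle$ and $\langle g_1,p\rangle=\langle g,x_ip\rangle$ one sees each $g_\varepsilon$ annihilates degree-$\le m-1$ polynomials, while if one restriction vanishes the other equals $g$ and annihilates degree-$\le m$ polynomials in the remaining coordinates. Inducting on the number of coordinates: if a restriction vanishes, apply the hypothesis at the same $m$ in one fewer variable; if both are nonzero, each has support $\ge 2^m$ by the hypothesis at $m-1$, and the faces are disjoint, giving $2^{m+1}$. (This is sharp: the alternating measure on an $(m+1)$-dimensional combinatorial subcube is a genuine dependency of support $2^{m+1}$ by Lemma~\ref{lemmaIdCube}.) Part (i) is then immediate, since no nonzero such measure fits on $r<2^{m+1}$ points.

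\emph{Part (ii): bounding a two-dimensional dependency space.} Suppose $\dim D\ge 2$ and let $V$ be a two-dimensional space of such measures; I claim the union $A$ of the supports of $V$ satisfies $|A|\ge 3\cdot 2^m$, contradicting $|A|\le r<3\cdot 2^m$. Fix a coordinate $i$, let $V_0,V_1$ be the images of $V$ under restriction to the two faces with support-unions $A_0,A_1$ (so $|A|=|A_0|+|A_1|$), and let $W_0,W_1\subseteq V$ be the subspaces supported on a single face (note $W_0\cap W_1=0$). If some $W_\varepsilon$ is all of $V$, then $V$ lives on one face and induction at the same $m$ gives $|A|\ge 3\cdot 2^m$. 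If $\dim W_1=1$ (the case $\dim W_0=1$ being symmetric), the pure element's restriction annihilates degree-$\le m$ polynomials, so $|A_1|\ge 2^{m+1}$; meanwhile $V_0$ is one-dimensional and lies in the degree-$\le m-1$ annihilator, so $|A_0|\ge 2^m$ by the min-weight bound, again summing to $3\cdot 2^m$. If $W_0=W_1=0$, then $V_0,V_1$ are both two-dimensional and both land in the degree-$\le m-1$ annihilator (by the same inner-product identities), so the inductive hypothesis at $m-1$ gives $|A_0|,|A_1|\ge 3\cdot 2^{m-1}$, summing to $3\cdot 2^m$.

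\emph{Main obstacle.} The delicate point is precisely the constant $3$ in (ii). Each individual dependency already has weight $\ge 2^{m+1}$, but merely unioning or intersecting two such supports yields nothing better than roughly $2^{m+1}$; the gain to $3\cdot 2^m=\tfrac32\cdot 2^{m+1}$ comes only from the coordinate split, which isolates a one-sided dependency (weight $2^{m+1}$, orthogonal to degree $\le m$) from a residual two-dimensional space that needs orthogonality only to degree $\le m-1$ and so contributes an extra $2^m$. I expect the bookkeeping of this case analysis — verifying that \emph{every} choice of coordinate lands in one of the three cases and never leaves us stuck below $3\cdot 2^m$ — to be the real work; tightness is witnessed by two $(m+1)$-dimensional subcubes sharing an $m$-dimensional face, whose alternating measures (Lemma~\ref{lemmaIdCube}) span a two-dimensional $D$ on exactly $3\cdot 2^m$ points.
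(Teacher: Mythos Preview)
Your proof is correct, and it takes a genuinely cleaner route than the paper's. The key difference is your opening reduction: you observe that the best achievable nullity is exactly $\dim D$, where $D$ is the column-dependency space of the \emph{full} matrix with rows indexed by all $|S|\le m$, and then identify $D$ with the space of signed measures on $\{0,1\}^N$ orthogonal to all multilinear polynomials of degree $\le m$. This recasts both parts as support-size estimates (essentially minimum-weight bounds for dual Reed--Muller codes), after which the coordinate-split induction is transparent. The paper instead works constructively: it builds the $S_i$'s by hand via a compression on one element $x$, and tracks the kernel of the resulting square matrix through the split into the quotient family $\{X_i\setminus\{x\}\}$ and the ``doubled'' family. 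Both arguments hinge on the same saving in part~(ii): a naive split would lose one dimension on each side, but part~(i) rules out the case where both sides are simultaneously at their worst --- in your language, $\dim W_\varepsilon=1$ forces the one-sided element to annihilate degree $\le m$ rather than $m-1$, yielding $2^{m+1}$ instead of $2^m$; in the paper's language, $\dim U=\dim V=1$ would force $u\ge 2^{m+1}$ and $v\ge 2^m$, contradicting $u+v=r<3\cdot 2^m$. Your three cases in (ii) are exhaustive since $\dim W_0+\dim W_1\le \dim V=2$, and the base cases ($m=0$, or small $N$) are routine; the tightness examples you cite match Lemma~\ref{exInc}. Your packaging buys clarity and a connection to standard coding-theoretic language; the paper's version is more self-contained but harder to parse.
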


We prove the lemma by induction and compressions, and in fact use the part (\textbf{i}) in order to deduce the part (\textbf{ii}). As it will be stressed in the proof, there is a subtlety in proving $n(X_1, X_2, \dots, X_r; S_1, S_2, \dots, S_r) \leq 1$, since the naive application of induction only gives $n(X_1, X_2, \dots, X_r; S_1, S_2, \dots, S_r) \leq 2$. The first part provides the required saving of 1 on the RHS.

\begin{proof}\emph{Part (i).} We prove the claim by induction on $\sum_{i=1}^r |X_i|$. If this is zero, then we have $r=1$ and $X_1 = \emptyset$, so just take $S_1 = \emptyset$.\\

Suppose that the lemma holds for smaller values of $\sum_{i=1}^r |X_i|$. Let $x \in \mathbb{N}$ be any element that is contained in at least one of the sets $X_i$. Denote by $\{Y_1, Y_2, \dots, Y_u\}$ the collection of sets given by $\{X_i \setminus \{x\}: i=1,\dots, r\}$, and further let $\{Z_1, \dots, Z_v\}$ be the set $\{X_i: x \notin X_i, X_i \cup \{x\} = X_j\text{ for some }j\}$. Thus $v \leq u$ and $u+v = r$. By induction hypothesis, there are relevant sets $S_1, \dots, S_u \subset \mathbb{N}^{(\leq m)}$ for $Y_1, \dots, Y_u$. Also, since $v \leq r/2 < 2^m$, we have relevant sets $S'_{u+1}, \dots S'_{r} \subset \mathbb{N}^{(\leq m-1)}$, and note that w.l.o.g. none of $S_1, S_2, \dots, S_u, S'_{u+1}, \dots, S'_r$ contains $x$. Set $S_{u+i} = S'_{u+i} \cup \{x\}$ for all $i = 1, \dots, v$. We claim that these have the desired property. So far, we know that for all $i$, $|S_i| \leq m$ holds.\\

Suppose that $\lambda_1, \dots, \lambda_r \in \mathbb{R}$ are such that $\sum_{j:S_i \subset X_j} \lambda_j = 0$ for all $i=1,2,\dots, r$. Define $\mu_i = \sum_{j: Y_i = X_{j} \setminus \{x\}} \lambda_j$, for each $i = 1, \dots, u$. Then we have $\sum_{j:S_i \subset Y_j} \mu_j = 0$ for all $i=1,2,\dots, u$. Since $n(Y_1, Y_2, \dots, Y_u; S_1, S_2, \dots, S_u) = 0$, we infer $\mu_j = 0$ for all $j$. Returning to the definion of $\mu_j$, we see that if $X_i$ is such that there no other $X_j$ with $X_i \setminus \{x\} = X_j \setminus \{x\}$, then $\lambda_i = 0$. On the other hand, if $i\not=j$ and $X_i \setminus \{x\} = X_j\setminus \{x\}$, then $\lambda_i = -\lambda_j$.\\

But, also setting $\nu_i = \lambda_j$ for $Z_i = X_j$, we have that for all $i=u+1, \dots, r$, $\sum_{j:S'_i \subset Z_j} \nu_j = 0$, which means that all $\nu_j=0$, as $n(Z_1, Z_2, \dots, Z_v; S'_{u+1}, S'_{u+2}, \dots, S'_r) = 0$. Combining these two conclusions, we have that all $\lambda_i = 0$, as desired.\\

\emph{Part (ii).} We follow the similar steps as in the previous part. However, we have to be slightly careful, since the previous argument unchanged would give us $K(X_1, X_2, \dots, X_r; S_1, S_2, \dots, S_r)$ essentially as a sum of kernels of similar matrices for $Y_1, Y_2, \dots, Y_u$ and $Z_{u+1}, Z_{u+2}, \dots, Z_r$. This way, we could be 1 dimension short of the desired goal, as this argument only allows us to deduce $n(X_1, X_2, \dots, X_r; S_1, S_2, \dots, S_r) \leq 2$, so we have to be more efficient. In order to overcome this issue, we shall apply the part (i) of the lemma.\\

We prove the claim by induction on $\sum_{i=1}^r |X_i|$. If this is zero, then we have $r=1$ and $X_1 = \emptyset$, so just take $S_1 = \emptyset$.\\

Suppose that the lemma holds for smaller values of $\sum_{i=1}^r |X_i|$. Let $x \in \mathbb{N}$ be any element that is contained in at least one of the sets $X_i$. Denote by $\{Y_1, Y_2, \dots, Y_u\}$ the collection of sets given by $\{X_i \setminus \{x\}: i=1,\dots, r\}$, and further let $\{Z_1, \dots, Z_v\}$ be the set $\{X_i: x \notin X_i, X_i \cup \{x\} = X_j\text{ for some }j\}$. Thus $v \leq u$ and $u+v = r$. Pick the sets $S_1, S_2, \dots, S_u \in \mathbb{N}^{(\leq m)}$ such that $U = K(Y_1, Y_2, \dots, Y_u; S_1, S_2, \dots, S_u)$ is of minimum dimension. Further, pick the sets $S'_{u+1}, S'_{u+2}, \dots, S'_r \in \mathbb{N}^{(\leq m-1)}$ such that $V = K(Z_1, Z_2, \dots, Z_v; S'_{u+1}, S'_{u+2}, \dots, S'_{r})$ is of minimum dimension. Finally, set $S_{u+i} = S'_{u+i} \cup \{1\}$ for $i = 1, \dots, v$. All $S_i$ have size at most $m$.\\

By induction hypothesis, we have $\dim U \leq 1$ and, since $v \leq r/2 < 3.2^{m-1}$, by induction hypothesis we have $\dim V \leq 1$. However, we can make a saving of one dimension as promised. Suppose that $\dim U = \dim V = 1$. Then, by part (i), since $U,V$ are of minimal possible dimension, we must have $u \geq 2^{m+1}$ and $v \geq 2^m$, which is a contradiction as $u + v = r < 3.2^m$. Therefore, $\dim U + \dim V \leq 1$.\\
We may reorder $X_1, X_2, \dots, X_r$, if necessary, to have $Y_i = X_i \setminus \{x\}$, for $i = 1, 2, \dots, u$, and $Z_i = X_{u+i} \setminus \{x\}$ with $x \in X_{u + i}$ for $i= 1,2, \dots, v$. Furthermore, we may also assume that $Z_i = X_{u-v+i} \setminus \{x\}$ with $x \notin X_{u-v+i}$ for $i= 1,2, \dots, v$. Now proceed as in the part (i), with the argument modified to suit the new context of possibly non-trivial kernels. Suppose that $\lambda_1, \dots, \lambda_r \in \mathbb{R}$ are such that $\sum_{j:S_i \subset X_j} \lambda_j = 0$ for all $i=1,2,\dots, r$. Define $\mu_i = \sum_{j: Y_i = X_{j} \setminus \{x\}} \lambda_j$, for each $i = 1, \dots, u$, thus
\begin{dmath*}\mu_i = \begin{cases} 
      \hfill \lambda_i    \hfill & \text{ if }i \leq u-v \\
      \hfill \lambda_i + \lambda_{i + v} \hfill & \text{ if } u-v < i \leq u\\
  \end{cases}\end{dmath*}

Then we have $\sum_{j:S_i \subset Y_j} \mu_j = 0$ for all $i=1,2,\dots, u$. This thus gives $\mu \in U$.\\

Next, set $\nu_i = \lambda_j$ for $Z_i = X_j$, i.e. $\nu_i = \lambda_{u + i}$ for $i = 1, 2, \dots, v$. We have $\sum_{j:T'_i \subset Z_j} \nu_j = 0$ for all $i=u+1, \dots, r$,  which means that $\nu \in V$. Expressing the $\lambda_i$ in terms of $\mu_i$ and $\nu_i$ we have
\begin{dmath*}\lambda_i = \begin{cases}
\hfill \mu_i    \hfill & \text{ if }i \leq u-v \\
\hfill \mu_i - \nu_{i+v - u}    \hfill & \text{ if }u-v < i \leq u \\
\hfill \nu_{i - u} \hfill & \text{ if }u < i\\
\end{cases}\end{dmath*}
Since $\mu \in U$ and $\nu \in V$, we can express any given $\lambda \in K(X_1, X_2, \dots, X_r; S_1, S_2, \dots, S_r)$ as a sum of vectors in two supspaces of $\mathbb{R}^r$, isomorphic to $U$ and $V$, so $K(X_1, X_2, \dots, X_r; S_1, S_2, \dots, S_r)$ is a subset of at most 1-dimensional subpsace, as desired.\end{proof}

The following corollary just restates the lemma in the context of the non-empty sets, as it will be required later in the light of the Observation~\ref{setobs}.

\begin{corollary}\label{nonemptyAlgCor}Let $m, k \in \mathbb{N}$. Given any distinct non-empty sets $X_1, X_2, \dots,X_r \in \mathbb{N}^{(<\omega)}$, we can find non-empty sets $S_1, S_2, \dots, S_r \subset \mathbb{N}^{(\leq m)}$ which enjoy the following property.
\begin{description}
\item[(i)] $n(X_1, X_2, \dots, X_r; S_1, S_2, \dots, S_r) = 0$, provided $r < 2^{m+1} - 1$.
\item[(ii)] $n(X_1, X_2, \dots, X_r; S_1, S_2, \dots, S_r) \leq 1$, provided $r < 3.2^{m} - 1$. 
\end{description}\end{corollary}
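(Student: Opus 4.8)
The plan is to deduce the corollary from Lemma~\ref{algLemmaSets} by augmenting the family of sets with the empty set, which is exactly what accounts for the loss of $1$ in both bounds. Given distinct non-empty $X_1, \dots, X_r$, the sets $\emptyset, X_1, \dots, X_r$ are $r+1$ distinct sets, and I apply Lemma~\ref{algLemmaSets} to them. For part \textbf{(i)} this requires $r+1 < 2^{m+1}$, i.e. $r < 2^{m+1}-1$, and for part \textbf{(ii)} it requires $r+1 < 3\cdot 2^m$, i.e. $r < 3\cdot 2^m - 1$, matching the two hypotheses. This produces sets $T_0, T_1, \dots, T_r \in \mathbb{N}^{(\leq m)}$ with $n(\emptyset, X_1, \dots, X_r; T_0, \dots, T_r) = 0$ in case \textbf{(i)} and $\leq 1$ in case \textbf{(ii)}. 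Writing $M$ for the corresponding $(r+1)\times(r+1)$ matrix, the task is to extract $r$ \emph{non-empty} row-sets $S_1, \dots, S_r$ so that the $r\times r$ matrix $I(X_1, \dots, X_r; S_1, \dots, S_r)$, obtained from $M$ by deleting the $\emptyset$-column and replacing the rows, still has nullity at most that of $M$.

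The structural facts I would record first are: the $\emptyset$-column of $M$ is the indicator vector $(\mathbbm{1}_{T_i = \emptyset})_i$, so it is nonzero precisely when some $T_i$ is empty; and a row of $M$ equals the all-ones vector if and only if the corresponding $T_i$ is empty, since $\mathbbm{1}_{T_i \subset \emptyset} = \mathbbm{1}_{T_i = \emptyset}$. Let $k$ be the number of empty $T_i$. As the $k$ empty rows all equal the all-ones vector, collapsing the duplicates shows $\mathrm{nullity}(M) \geq k-1$; hence $k \leq 1$ in case \textbf{(i)} and $k \leq 2$ in case \textbf{(ii)}. Moreover, in case \textbf{(i)} the matrix $M$ is invertible, so its $\emptyset$-column is nonzero, forcing $k \geq 1$ and therefore $k = 1$.

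For part \textbf{(i)}, and for part \textbf{(ii)} whenever $k \leq 1$, the extraction is immediate. If $k = 1$, reorder the rows so that $T_0 = \emptyset$; then $M$ has block form with a single $1$ in the top-left entry, an all-ones top row, a zero first column below it, and $M' = I(X_1, \dots, X_r; T_1, \dots, T_r)$ in the bottom-right. A direct kernel computation gives $\mathrm{nullity}(M') = \mathrm{nullity}(M)$, and $T_1, \dots, T_r$ are all non-empty, so I set $S_i = T_i$. If $k = 0$ (possible only in case \textbf{(ii)}, which then forces $\mathrm{nullity}(M) = 1$ since the $\emptyset$-column is zero), the $(r+1)\times r$ matrix $N = I(X_1, \dots, X_r; T_0, \dots, T_r)$ has full column rank $r$; deleting one redundant row leaves an invertible $r\times r$ matrix, giving nullity $0$ with all rows non-empty.

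The main obstacle is the residual case $k = 2$ in part \textbf{(ii)}, where only $r-1$ non-empty row-sets survive but $r$ are needed, so a non-empty set must be manufactured without raising the nullity above $1$. Here $\mathrm{nullity}(M)=1$ and $\mathrm{rank}(M)=r$, so $M$ has exactly $r$ distinct rows, namely the all-ones row together with the $r-1$ non-empty rows, and these are linearly independent. Since each non-empty row has a $0$ in the $\emptyset$-coordinate, deleting that coordinate preserves their independence, so \emph{restricted to the columns $X_1, \dots, X_r$ the $r-1$ non-empty rows already have rank $r-1$}. Consequently I may append any non-empty set of size at most $m$, for instance $\{q\}$ with $q \in X_1$, as the $r$-th row-set: the resulting $r\times r$ matrix has rank at least $r-1$, hence nullity at most $1$, and all its row-sets are non-empty, as required. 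The degenerate case $r = 1$ is handled directly by taking $S_1 = \{q\}$ for any $q \in X_1$.
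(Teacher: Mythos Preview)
Your proof is correct and follows the same strategy as the paper: apply Lemma~\ref{algLemmaSets} to the augmented family $\emptyset, X_1, \dots, X_r$, then extract $r$ non-empty row-sets from the resulting $(r+1)\times(r+1)$ matrix by a rank/nullity argument. Your case analysis on the number $k$ of empty $T_i$ is in fact more thorough than the paper's, which splits only into ``all $S_i$ non-empty'' and ``$S_0=\emptyset$ after reordering'' and in the second case tacitly assumes the remaining $S_1,\dots,S_r$ are non-empty (i.e.\ $k\le 1$); you explicitly dispose of the residual possibility $k=2$ in part~(ii) by observing that the $r-1$ surviving non-empty rows are already independent on the columns $X_1,\dots,X_r$ and appending an arbitrary singleton $\{q\}\subset X_1$ as the $r$-th row.
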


\begin{proof} In both cases, we apply the Lemma~\ref{algLemmaSets} to the distinct sets $\emptyset, X_1, X_2, \dots, X_r$ to find sets $S_0, S_1, \dots, S_r$ of size at most $m$ such that
$$n(\emptyset, X_1, X_2, \dots, X_r; S_0, S_1, S_2, \dots, S_r) \leq q$$
where $q = 0$ if $r < 2^{m+1} - 1$, and $q = 1$ if $r < 3.2^m - 1$. We now show that, starting from 
$$n(\emptyset, X_1, X_2, \dots, X_r; S_0, S_1, S_2, \dots, S_r) \leq q$$
we can reorder sets $S_i$ so that 
$$n(X_1, X_2, \dots, X_r; S_1, S_2, \dots, S_r) \leq q$$
which finishes the proof.\\

Let $I$ be the matrix $I(\emptyset, X_1, X_2, \dots, X_r; S_0, S_1, S_2, \dots, S_r)$. By rank-nullity theorem, the rank of $I$ (which is also the column rank) is at least $r+1 - q$. If all the sets $S_i$ are non-empty, then the first column of $I$ is zero. Removing the first row from $I$, we get a matrix with column rank also $\geq r+1-q$, thus having row rank also $\geq r+1-q$. Remove the first row, the remaining matrix is $I(X_1, X_2, \dots, X_r; S_1, S_2, \dots, S_r)$ and it has row rank at least $r-q$. Thus its rank is at least $r-q$, so by rank-nullity theorem, $n(X_1, X_2, \dots, X_r; S_1, S_2, \dots, S_r) \leq q$ as desired.\\
On the other hand, if $S_0 = \emptyset$ (after reordering if necessary), remove the first row from $I$, to get a matrix with row rank at least $r-q$, and whose first column is zero. But removing the first column doesn't change the column rank, and we end with matrix $I(X_1, X_2, \dots, X_r; S_1, S_2, \dots, S_r)$ of column rank $\geq r-q$, which by rank-nullity theorem gives
$$n(X_1, X_2, \dots, X_r; S_1, S_2, \dots, S_r) \leq q$$
as desired.\end{proof}


The next corollary is tailored to the analysis of the $\mathcal{F}_{N,d,m}-$incident sets in the Corollary~\ref{unionsCor}.

\begin{corollary}\label{algCor2}Suppose that $X_1, X_2, \dots, X_r \in \mathbb{N}^{(<\omega)}$ are distinct, $t \leq r$ and $S_1, S_2, \dots, S_t \in \mathbb{N}^{(<\omega)}$ satisfy
$$n(X_1, X_2, \dots, X_t; S_1, S_2, \dots, S_t) = 0.$$ Provided $r < 3.2^m$, we can find $S_{t+1}, S_{t+2}, \dots, S_r \in \mathbb{N}^{(\leq m)}$ such that 
$$n(X_1, X_2, \dots, X_r; S_1, S_2, \dots, S_r) \leq 1.$$
If $r < 3.2^m-1$ and the sets $X_i$ are non-empty, then additionaly, sets $S_{t+1}, S_{t+2}, \dots, S_r$ can be chosen to be non-empty.
\end{corollary}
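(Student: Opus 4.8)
The plan is to use Lemma~\ref{algLemmaSets}(ii) (and Corollary~\ref{nonemptyAlgCor}(ii) in the non-empty case) as a black box that supplies a \emph{full} family of test sets with small nullity, and then to \emph{select} from this family exactly $r-t$ sets that complement the given $S_1, \dots, S_t$, controlling the selection by a quotient/rank count. Throughout I would view $I(X_1, \dots, X_r; \cdot)$ row by row, each row being the vector $(\mathbbm{1}_{S \subset X_1}, \dots, \mathbbm{1}_{S \subset X_r}) \in \mathbb{R}^r$ attached to a test set $S$, and use that the nullity equals $r$ minus the dimension of the span of these rows. If $t = r$ nothing needs to be added, so I assume $t < r$.

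First I would record what the hypothesis gives. Since $n(X_1, \dots, X_t; S_1, \dots, S_t) = 0$, the $t \times t$ matrix on columns $1, \dots, t$ is invertible, so the $t$ rows coming from $S_1, \dots, S_t$, viewed in the full space $\mathbb{R}^r$ (all $r$ columns), are linearly independent; let $W \subseteq \mathbb{R}^r$ be their span, so $\dim W = t$. Only this rank statement is used, which is exactly why the hypothesis may involve arbitrary $S_i \in \mathbb{N}^{(<\omega)}$ of unrestricted size.

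Next I would apply Lemma~\ref{algLemmaSets}(ii) to $X_1, \dots, X_r$ --- legitimate since $r < 3 \cdot 2^m$ --- to obtain $T_1, \dots, T_r \in \mathbb{N}^{(\leq m)}$ with $n(X_1, \dots, X_r; T_1, \dots, T_r) \leq 1$; writing $\rho_i = (\mathbbm{1}_{T_i \subset X_1}, \dots, \mathbbm{1}_{T_i \subset X_r}) \in \mathbb{R}^r$ for the row of $T_i$, this says their span $U$ has $\dim U \geq r - 1$. Passing to the quotient $\pi \colon \mathbb{R}^r \to \mathbb{R}^r / W$, the images $\pi(\rho_1), \dots, \pi(\rho_r)$ span $\pi(U) = (U + W)/W$, of dimension $\dim(U+W) - t \geq (r-1) - t$ and at most $r - t$. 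I would then pick indices $j_1, \dots, j_{r-t}$ so that $\pi(\rho_{j_1}), \dots, \pi(\rho_{j_{r-t}})$ already span $\pi(U)$ (choose a basis of $\pi(U)$ among the $\pi(\rho_i)$, then pad the index set up to size $r-t$), and set $S_{t+k} = T_{j_k}$. Then the row space of $I(X_1, \dots, X_r; S_1, \dots, S_r)$ contains $W$ together with $\rho_{j_1}, \dots, \rho_{j_{r-t}}$, hence has dimension at least $\dim W + \dim \pi(U) \geq t + (r - 1 - t) = r - 1$; so the rank is at least $r-1$ and the nullity at most $1$. The non-empty case is identical, invoking Corollary~\ref{nonemptyAlgCor}(ii) (valid for $r < 3 \cdot 2^m - 1$, with the $X_i$ non-empty) so that the $T_i$, and hence the selected $S_{t+k}$, are non-empty.

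The step I expect to need the most care is the selection of exactly $r-t$ sets: one must produce the right \emph{number} of new rows while still contributing the missing $r - 1 - t$ dimensions on top of the fixed space $W$. The quotient by $W$ is what makes the bookkeeping transparent, decoupling the contribution of the pre-given rows from that of the newly chosen ones; the only numerical point to verify is $\dim \pi(U) \leq r - t$, which guarantees that a spanning subset of size at most $r-t$ exists so that padding never forces more than $r-t$ rows. Matching the thresholds $3 \cdot 2^m$ and $3 \cdot 2^m - 1$ to the two inputs is then automatic.
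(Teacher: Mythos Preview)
Your proof is correct and follows the same overall strategy as the paper's: obtain $T_1, \dots, T_r$ from Lemma~\ref{algLemmaSets}(ii) (or Corollary~\ref{nonemptyAlgCor}(ii) in the non-empty case), then select $r-t$ of them to serve as $S_{t+1}, \dots, S_r$. The only difference lies in how the selection step is packaged. The paper isolates a separate ``row-removal lemma'': starting from the $(t+r)\times r$ matrix $I(X_1,\dots,X_r; S_1,\dots,S_t,T_1,\dots,T_r)$, it iteratively discards rows among the last $r$ that lie in the span of the remaining ones, until exactly $r$ rows are left, arguing that the kernel is unchanged at each step. You instead pass to the quotient $\mathbb{R}^r/W$ and choose $r-t$ of the $\rho_i$ whose images span $\pi(U)$. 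Both arguments produce a final matrix with row space $U+W$ and are equivalent pieces of elementary linear algebra; your quotient formulation is arguably more direct and avoids the auxiliary lemma.
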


\begin{proof}Apply the Lemma~\ref{algLemmaSets} (ii), to get sets $T_1, T_2, \dots, T_r \in \mathbb{N}^{(\leq m)}$ such that $n(X_1, X_2, \dots, X_r; T_1, T_2, \dots, T_r) \leq 1$, or the Corollary~\ref{nonemptyAlgCor} if $r < 3.2^m-1$ and the sets $X_i$ are non-empty, to make the sets $T_i$ non-empty. Look at the $(t+r)\times r$ matrix $I(X_1, X_2, \dots, X_r; S_1, S_2, \dots, S_t, T_1, T_2, \dots, T_r)$. We shall remove $t$ rows from those corresponding to $T_1, T_2,\dots,T_r$ to get the desired matrix. The following row-removal lemma does this for us.

\begin{lemma}Suppose that $A$ is $r+t \times r$ matrix with the first $t$ rows linearly independent and $t \leq r$. Then we can remove $t$ rows from the last $r$ rows of $A$, so that the kernel of $A$ doesn't change.\end{lemma}
\begin{proof} If $I \subset [r+t]$, let $A_I$ stand for the matrix formed from rows of $A$ with indices in $I$. Starting from $I = [r+s]$, we shall remove an element greater than $t$ from $I$, so that at each step we have $\ker A_I = \ker A$.\\
Suppose that we have $I \subset [r+t]$ with $[t] \subset I$, but $|I| > r$, such that $\ker A_I = \ker A$ holds. If we can pick $x > t$ in $I$, so that $\ker A_{I\setminus \{x\}} = \ker A_I$, we are done. Otherwise, no such $x$ works. Observe that if a row $v^T$ of $A_I$ is a linear combination of other rows, then it can be removed from $A_I$. To spell it out, write $v_i^T$ for $i$-th row of $A$ and  suppose that $v_i^T = \sum_{j \in I \setminus \{i\}} \lambda_j v_j^T$. Then, if $\mu \in \ker A_{I \setminus \{i\}}$, we have $\mu \cdot v_i^T = \sum_{j \in I \setminus \{i\}} \lambda_j \mu\cdot v_j^T  = 0$. So $\ker A_{I \setminus \{x\}} = \ker A_I$.\\
Thus, we have $v_1^T, \dots, v_t^T$ linearly independent, and $v_i \notin span \{v_j: j \in I \setminus \{i\}\}$ for $i \in I \setminus [t]$. But, then, $|I| > r$ and the rows of $I$ are linearly independent, but are of length $r$, which is contradiction. Hence, we can proceed, until we reach $|I| = r$, as desired.\end{proof}

The matrix
$$I(X_1, X_2, \dots, X_r; S_1, S_2, \dots, S_t, T_1, T_2, \dots, T_r)$$
satisfies the conditions of the lemma since $n(X_1, X_2, \dots, X_t; S_1, S_2, \dots, S_t) = 0$, so by applying the lemma, we can pick $S_{t+1}, S_{t+2}, \dots, S_r$ among the sets in $T_1, T_2, \dots, T_r$ so that
\begin{dmath*}n(X_1, X_2, \dots, X_r; S_1, S_2, \dots, S_r) = n(X_1, X_2, \dots, X_r; S_1, S_2, \dots, S_t, T_1, T_2, \dots, T_r) \leq n(X_1, X_2, \dots, X_r; T_1, T_2, \dots, T_r) \leq 1.\end{dmath*}
\end{proof}

Similarly to the Corollary~\ref{minincline}, the next corollary is consistent with the vague idea that combinatorial subspaces are the source of non-trivial $\mathcal{F}_{N,d,m}-$incident sets. In particular, we show that $\mathcal{F}_{N,d,m}-$incident sets behave like combinatorial subspaces when it comes to taking unions -- the size of union of two $\mathcal{F}_{N,d,m}-$incident sets of size $2^{m+1}$ is at least $3.2^m$. 

\begin{corollary}\label{unionsCor}Let $d,m \in \mathbb{N}$ be given. 
\begin{description}
\item[(i)] If $T \subset \{0,1\}^N$ is $\mathcal{F}_{N,d,m}-$incident, then $|T| \geq \min\{d+2, 2^{m+1}\}$.
\item[(ii)] If $T_1, T_2 \subset \{0,1\}^N$ are distinct, of size at most $d+1$ and minimal (w.r.t. inclusion) $\mathcal{F}_{N,d,m}-$incident, then $|T_1 \cup T_2| \geq 3.2^m.$
\end{description} 
\end{corollary}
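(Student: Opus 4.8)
The plan is to deduce both parts from the combinatorial nullity estimates (Corollary~\ref{nonemptyAlgCor} and Corollary~\ref{algCor2}) together with Observation~\ref{setobs}, after first arranging that one of the points is $\emptyset$ so that the set-language $f(x) = \mathbbm{1}_{S \subset X}$ becomes available. The device enabling this for an arbitrary incident subset of $\{0,1\}^N$ is the symmetric-difference map: for a fixed $X_0$, the affine map $\phi(x) = Dx + \mathbbm{1}_{X_0}$, where $D$ is diagonal with $D_{ii} = -1$ for $i \in X_0$ and $D_{ii} = 1$ otherwise, is an affine isomorphism of $\mathbb{R}^N$ preserving $\{0,1\}^N$ and realising $X \mapsto X \triangle X_0$, in particular sending $X_0$ to $\emptyset$. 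By the affine invariance of incidence established in the proof of Proposition~\ref{incCriterion}, $T$ is $\mathcal{F}_{N,d,m}$-incident iff $\phi(T)$ is, so I may always assume $\emptyset \in T$.

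For part (\textbf{i}), suppose $T$ is incident with $|T| < \min\{d+2, 2^{m+1}\}$; then $|T| \le d+1$ (so the number of points minus one is $\le d$ and Proposition~\ref{incCritBasic} applies) and $|T| \le 2^{m+1}-1$. Translating as above, write $T = \{\emptyset, X_1, \dots, X_r\}$ with the $X_i$ distinct and non-empty and $r < 2^{m+1}-1$. By Corollary~\ref{nonemptyAlgCor}(i) there are non-empty $S_1, \dots, S_r$ of size $\le m$ with $n(X_1, \dots, X_r; S_1, \dots, S_r) = 0$, i.e. for $j=1,\dots,r$ the columns $(\mathbbm{1}_{S_i \subset X_j})_{i=1}^r$ are linearly independent. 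Since all $S_i$ are non-empty, the vector attached to $\emptyset$ is the zero vector, so adjoining it leaves the family affinely independent; by Proposition~\ref{incCritBasic} this contradicts incidence of $T$, proving part (\textbf{i}).

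For part (\textbf{ii}), part (\textbf{i}) together with $|T_i| \le d+1$ forces $|T_i| \ge 2^{m+1}$, and distinctness plus minimality give $T_1 \not\subseteq T_2$, $T_2 \not\subseteq T_1$, so $W := T_1 \cap T_2$ is a proper subset of the minimal incident set $T_1$ and hence is not incident. Assume for contradiction $|T_1 \cup T_2| < 3\cdot 2^m$. If $|W| \le 1$ then $|T_1 \cup T_2| \ge 2\cdot 2^{m+1} - 1 \ge 3\cdot 2^m$, so $|W| \ge 2$; translate so $\emptyset \in W$ and write $T_1 \cup T_2 = \{\emptyset, Z_1, \dots, Z_s\}$ with $s < 3\cdot 2^m - 1$, where $W = \{\emptyset, Z_1, \dots, Z_c\}$, $T_1 = \{\emptyset\} \cup \{Z_j : j \in A\}$, $T_2 = \{\emptyset\} \cup \{Z_j : j \in B\}$, $A \cap B = \{1,\dots,c\}$, $A \cup B = [s]$, $|A| = a$, $|B| = b$, $s = a+b-c$. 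Since $W$ is not incident, Observation~\ref{setobs} gives non-empty $S_1, \dots, S_c$ of size $\le m$ with $n(Z_1, \dots, Z_c; S_1, \dots, S_c) = 0$; feeding this into Corollary~\ref{algCor2} (with $t = c$, $r = s < 3\cdot 2^m - 1$, the $Z_i$ non-empty) yields non-empty $S_{c+1}, \dots, S_s$ with $n(Z_1, \dots, Z_s; S_1, \dots, S_s) \le 1$. Writing $M = I(Z_1, \dots, Z_s; S_1, \dots, S_s)$, its rank is at least $s-1$.

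The crux, and the step I expect to be most delicate, is the matching upper bound on $\mathrm{rank}(M)$ coming from incidence of $T_1, T_2$. Let $M_A$, $M_B$ be the submatrices on the columns indexed by $A$, $B$. I claim $\mathrm{rank}(M_A) \le a-1$: if $M_A$ had full column rank $a$, then by Lemma~\ref{restrLemma} some $a$ of the rows $S_i$ would give an invertible $a\times a$ minor, and those $a$ non-empty sets, read as $(\le m)$-functions, would witness via Proposition~\ref{incCritBasic} and the $\emptyset \mapsto 0$ reduction that $T_1$ is \emph{not} incident, a contradiction; similarly $\mathrm{rank}(M_B) \le b-1$. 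The column space of $M$ is $U_A + U_B$, where $U_A, U_B$ are the column spans of $M_A, M_B$, and $U_A \cap U_B$ contains the span of the columns $Z_1, \dots, Z_c$, which is $c$-dimensional because $n(Z_1, \dots, Z_c; S_1, \dots, S_c) = 0$. Hence $\mathrm{rank}(M) = \dim(U_A + U_B) \le (a-1)+(b-1)-c = s-2$, contradicting $\mathrm{rank}(M) \ge s-1$, and forcing $|T_1 \cup T_2| \ge 3\cdot 2^m$. The points needing care are the bookkeeping of non-emptiness (so that "$\emptyset$ contributes the zero vector" stays valid throughout) and checking the size thresholds $r < 3\cdot 2^m$, $r < 3\cdot 2^m - 1$ whenever Corollary~\ref{algCor2} is invoked.
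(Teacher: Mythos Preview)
Your proposal is correct and follows essentially the same route as the paper: translate by a symmetric-difference reflection so that $\emptyset$ lies in the intersection, use Observation~\ref{setobs} on the non-incident intersection to get the initial full-rank block, extend via Corollary~\ref{algCor2} to a matrix of nullity at most one, and then derive a contradiction by bounding the column rank from above using incidence of $T_1$ and $T_2$. The only cosmetic differences are that the paper phrases the final dimension count via quotient spaces $W/Z = U/Z + V/Z$ rather than your inclusion--exclusion $\dim(U_A+U_B) \leq \dim U_A + \dim U_B - \dim(U_A\cap U_B)$, and the paper invokes Proposition~\ref{incCriterion} (with the affine map built in) where you instead cite affine invariance once and then appeal to Proposition~\ref{incCritBasic}; both are equivalent.
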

\begin{proof} \emph{Part (i).} Suppose that $T = \{x_0, x_1, x_2, \dots, x_r\} \subset \{0,1\}^N$ is $\mathcal{F}_{N,d,m}$-incident and that $r < 2^{m+1}-1, d+1$. Note that the map $X \mapsto X \Delta A$, corresponds to a reflection $\alpha:\mathbb{R}^N \to \mathbb{R}^N$. In particular, taking $A$ to be the set of non-zero coordinates of $x_0$, we have an affine isomorphism $\alpha$ that preserves the cube $\{0,1\}^N$ and sends $x_0$ to zero. Let $X_i\subset [N]$ be the set corresponding to $\alpha(x_i)$, i.e. the set of indices $j$ such that $\alpha(x_i)_j = 1$. As $r < 2^{m+1}-1$, the Corollary~\ref{nonemptyAlgCor} yields non-empty sets $S_1, S_2, \dots, S_r \subset [N]$ of size at most $m$, such that 
$$n(X_1, X_2, \dots, X_r; S_1, S_2, \dots, S_r) = 0.$$
Choosing $(\leq m)-$functions $f_1, f_2, \dots, f_r$ with images $S_1, S_2, \dots, S_r$ we obtain that the vectors
\begin{dmath*}\begin{pmatrix} 0\\0\\\vdots\\0\end{pmatrix}, \begin{pmatrix} f_1(\alpha(x_1))\\f_2(\alpha(x_1))\\\vdots\\f_r(\alpha(x_1))\end{pmatrix}, \begin{pmatrix} f_1(\alpha(x_2))\\f_2(\alpha(x_2))\\\vdots\\f_r(\alpha(x_2))\end{pmatrix}, \dots, \begin{pmatrix} f_1(\alpha(x_r))\\f_2(\alpha(x_r))\\\vdots\\f_r(\alpha(x_r))\end{pmatrix}\end{dmath*}
are affinely independent. But, as $r \leq d$, the Proposition~\ref{incCriterion} applies to $T$, affine map $\alpha$ and functions $f_1, f_2, \dots, f_r$, which tells us that these vectors are affinely dependent, which is a contradiction. Thus $|T| = r + 1 \geq \min\{2^{m+1}, d+2\}$ as desired.\\

\emph{Part (ii).} If $T_1, T_2$ are disjoint, then by part (i), $|T_1 \cup T_2| \geq 2^{m+2}$, so we are done. Thus, assume that some $t_0$ belongs to both sets. Pick an affine isomorphism $\alpha:\mathbb{R}^N \to \mathbb{R}^N$ which sends $t_0$ to zero and preserves the cube $\{0,1\}^N$ (given by a suitable reflection). Let $X_1, X_2, \dots, X_t$ be the sets that correspond to the non-zero points of $\alpha(T_1 \cap T_2)$, $X_{t+1}, \dots, X_{t+r_1}$ be the sets that correspond to points in $\alpha(T_1 \setminus T_2)$ and $X_{t+r_1+1}, \dots, X_{t+r_1 + r_2}$ be the sets corresponding to points of $\alpha(T_2 \setminus T_1)$. If $|T_1 \cup T_2| \geq 3.2^m$, we are done. Otherwise $1 + t+r_1 +r_2 = |T_1 \cup T_2| < 3.2^m$.\\ 
Since they are minimal and distinct, $T_1, T_2$ cannot contain one another. So $T_1 \cap T_2$ is a proper subset of $T_1$ and hence is not $\mathcal{F}_{N,d,m}-$incident. Therefore, by the Observation~\ref{setobs}, we can find non-empty $S_1, S_2, \dots, S_t \in \mathbb{N}^{(\leq m)}$ such that
$$n(X_1, X_2, \dots, X_t; S_1, S_2, \dots, S_t) = 0.$$
Applying the Corollary~\ref{algCor2} (as $r+t_1+t_2 <3.2^m-1$), we obtain non-empty sets $S_{t+1}, \dots, S_{t+r_1+r_2} \in \mathbb{N}^{(\leq m)}$ such that
$$n(X_1, X_2, \dots, X_{r+t_1+t_2}, S_1, S_2, \dots, S_{r+t_1 +t_2}) \leq 1.$$
Now, take any $(\leq m)$-functions $f_1, \dots, f_{t+r_1 +r_2}$ to $[N]$ with images $S_1, S_2, \dots, S_{t+r_1+r_2}$, and let $x_i \in T_1 \cup T_2$ be point such that $X_i$ corresponds to $\alpha(x_i)$. Write $y_i$ for the vector $y_{ij} = f_j(x_i)$, $j=1,2,\dots, {t+r_1+r_2}$. Thus, $y_1, y_2, \dots, y_t$ are linearly independent and the rank of $y_1, y_2, \dots, y_{t + r_1 + r_2}$ is at least $t+r_1 +r_2 - 1$. Since $|T_1| \leq d+1$, we can apply the Proposition~\ref{incCriterion} to $T_1$, map $\alpha$ and functions $f_1, \dots, f_{t+r_1}$. Note that since the sets $S_i$ are non-empty, we have $f_i(0) = 0$ for all $i$. Thus, vectors $y_1, y_2, \dots, y_{t+r_1}$ have rank at most $t+r_1 -1$. Similarly, rank of $y_1, y_2, \dots, y_t, y_{t+r_1+1}, \dots, y_{t+r_1+r_2}$ is at most $t+ r_2 -1$.\\
To obtain contradiction, look at
\begin{description}
\item $U = span \{y_1, y_2, \dots, y_{t+r_1}\},$
\item $V = span \{y_1, y_2, \dots, y_{t}, y_{t+r_1+1}, y_{t+r_1+2}, \dots, y_{t+r_1+r_2}\},$
\item $W = span \{y_1, y_2, \dots, y_{t+r_1+r_2}\}$ and
\item $Z = span \{y_1, y_2, \dots, y_{t}\}.$
\end{description}
Thus, $\dim Z = t, \dim U \leq t+r_1-1, \dim V \leq t+r_2-1, \dim W \geq t+r_1+r_2-1$, $Z \subset U, V \subset W$ and $W = U+V$. Therefore $W/Z = U/Z + V/Z$. Finally, $r_1 +r_2 - 1 \leq \dim W - \dim Z = \dim W/Z \leq \dim U/Z + \dim V/Z \leq r_1 - 1 + r_2 - 1 = r_1 + r_2 - 2$, which is contradiction.\end{proof}

\begin{theorem}Suppose that $d,m \in \mathbb{N}$ satisfy $2^{m+1} - 1 \leq d \leq 3.2^m - 3$. Let $N \geq 1$. Then $$\alpha(2^N,d) \leq \left(\frac{25}{N}\right)^{1/2^{m+1}} 2^N.$$\end{theorem}

\begin{proof}Let $X = \{0,1\}^N \subset \mathbb{R}^N$. Applying the Proposition~\ref{incremfn}, we obtain a function $f \in span \mathcal{F}_{N, d, m}$, bijection onto its image when restricted to $X$, such that if $S \subset f(X)$ is affinely dependent then $f^{-1}(S)$ is $\mathcal{F}_{N,d,m}$-incident. Let $Y = f(X)$. Note that $|Y| = 2^N$ since $f$ is injective on $X$. We claim that $Y$ has no more than $d+1$ points on a same hyperplane, but all sufficiently large subsets of $Y$ have $d+1$ cohyperplanar points.\\

\emph{No more than $d+1$ points on a hyperplane.} Suppose that we have a set $S = \{s_1, s_2, \dots, s_{d+2}\} \subset Y$ that is a subset of a hyperplane. Pick a maximal affinely independent subset $S' \subset S$. W.l.o.g. $S' = \{s_1, \dots, s_r\}$, for some $r$. As $S'$ is a subset of a hyperplane, we have $r \leq d$. Look at $S'_1 = S' \cup \{s_{d+1}\}$. By the choice of $S'$, the set $S'_1$ is not affinely independent. By the choice of $f$, the preimage $f^{-1}(S'_1)$ is $\mathcal{F}_{N,d,m}-$incident. Find a subset $T_1$ of $f^{-1}(S'_1)$ which is minimal $\mathcal{F}_{N,d,m}-$incident, and arbitrary point $p$ in $T_1$. We also have $S'_2 = S \setminus \{p\}$ affinely dependent, as it is a subset of a hyperplane of size at $d+1$. By the choice of $f$, $f^{-1}(S'_2)$ is $\mathcal{F}_{N,d,m}-$incident, and has a minimal $\mathcal{F}_{N,d,m}-$incident subset $T_2$. Note that $p \in T_1 \setminus T_2$, so $T_1, T_2$ are distinct, and $|T_1|, |T_2| \leq d+1$. The Corollary~\ref{unionsCor}(ii) applies to give $d+2 = |S| \geq |T_1 \cup T_2| \geq 3.2^m > d+2$, which is a contradiction.

\emph{Dense subsets are not in general position.} Let $T \subset Y$ have size at least $\left(\frac{25}{N}\right)^{1/2^{m+1}} 2^N$. Then, by the Theorem~\ref{sperner}, $f^{-1}(T)$ contains a $m+1$-dimensional combinatorial subspace. Applying the Lemma~\ref{exInc}, we have that the points of $T = f(f^{-1}(T))$ are affinely dependent. Adding any $d+1 - 2^{m+1}$ points to the set $T$ proves the claim.\end{proof}

\section{Conclusion}
Even though there are now some non-trivial estimates of $\alpha(n,d)$~\cite{CardinalTothWood}, the gap between the lower and upper bounds is still very large. Of course, the first question is still to determine the $\alpha(n,d)$. Regarding the current lower bounds on $\alpha(n,2)$, both in~\cite{Furedi} and in~\cite{CardinalTothWood}, we note that their proofs are based on relatively general probabilistic estimates of independence number of hypergraphs. However, these approaches used very little of the structure the given sets of points. In fact, possible algebraic properties of such sets have not been exploited. For example, if $X$ is a set of points with no more than 3 on a line, but with no dense set in general position, we can expect that plenty of pairs of points in $X$ have a third point in $X$ on their line. This gives raise to an algebraic operation: given two points $x_1, x_2$ of $X$, set $x_1 \ast x_2$ to be the third point of $X$ on their line, if such a point exists. Of course, there is an issue of how to define $x_1 \ast x_2$ for all pairs, but at least for plenty of pairs it can be defined. Hopefully, if $X$ is a set for which the $\alpha(|X|, 2)$ is attained, we could deduce some properties of the operation $\ast$.\\   

Leaving determination of $\alpha(n,d)$ aside, note that it would be surprising if the $\alpha(n,d)/n$ did not decrease in $d$. In particular, the current situation with the upper bounds is that we have infinitely many $d$, for which $\alpha(n,d)/n = O(1/log^{\beta_d} n)$ for some $\beta_d > 0$, while for infintely many other $d$, the bounds for $\alpha(n,d)/n$ are coming from the density Hales-Jewett theorem, and are rougly comparable to inverse of Ackermann function. It is most probably far from truth that $\alpha(n,d)/n$ is actually close to these estimates. An obvious question is the following.

\begin{question}What is the relationship between $\alpha(n,d_1)$ and $\alpha(n,d_2)$ for $d_1 < d_2$? Do we always have $\alpha(n,d)/n \geq \alpha(n,d+1)/n$?\end{question}     

Another question is the relationship between the bounds in the density Hales-Jewett theorem and the $\alpha(n,d)$. Do these have to be related?

Finally, one of the key tools in this paper were the algebraic lemmas~\ref{algebraicLemma1} and~\ref{algLemmaSets}. It could be of interest to study $n(X_1, X_2, \dots, X_r; S_1, S_2, \dots, S_r)$ further.



\section{Acknowledgments}
I would like to thank Trinity College and the Department of Pure Mathematics and Mathematical Statistics of Cambridge University for their generous support and Imre Leader for the helpful discussions concerning this paper.

\end{document}